\documentclass{amsart}

\usepackage{amsmath, amsthm, amssymb}
\usepackage[colorlinks=true,linkcolor=blue,urlcolor=blue,unicode=true]{hyperref}
\usepackage[capitalize,noabbrev]{cleveref}
\usepackage{tikz-cd}

\numberwithin{equation}{section}

\crefname{prop}{Proposition}{Propositions}
\crefname{equation}{Diagram}{Diagrams}

\theoremstyle{plain}
\newtheorem{thm}{Theorem}[section]
\Crefname{thm}{Theorem}{Theorems}
\newtheorem{cor}[thm]{Corollary}
\newtheorem{lem}[thm]{Lemma}
\newtheorem{prop}[thm]{Proposition}
\theoremstyle{definition}
\newtheorem{defi}[thm]{Definition}
\newtheorem{rem}[thm]{Remark}


\newcommand{\R}{\mathbb{R}}

\newcommand{\HLC}{\mathrm{HLC}}

\newcommand{\indexSet}{\mathcal{I}}
\renewcommand{\H}{H}

\DeclareMathOperator{\im}{im}

\DeclareMathOperator{\coker}{coker}
\DeclareMathOperator{\St}{St}
\DeclareMathOperator{\lcs}{lcs}
\newcommand{\CH}{\check{H}}
\newcommand{\PFD}{\mathrm{PFD}}

\DeclareMathOperator{\Cov}{Cov}

\DeclareMathOperator{\Vietoris}{Vtr}

\newcommand{\Vect}{\mathsf{Vect}}

\newcommand{\sing}{S}
\newcommand{\viet}{V}

\title[Comparison of persistent singular and \v{C}ech homology]{Comparison of persistent singular and \v{C}ech homology for locally connected filtrations}
\author{Maximilian Schmahl}
\address{Maximilian Schmahl, Mathematisches Institut, Universit\"at Heidelberg}
\email{\href{mailto:mschmahl@mathi.uni-heidelberg.de}{mschmahl@mathi.uni-heidelberg.de}}
\date{\today}

\subjclass[2020]{55N05, 55N10, 55N31}

\AtBeginDocument{%
   \def\MR#1{}
}
\begin{document}
\begin{abstract}
    We show that the interleaving distance between the persistent singular homology and the persistent \v{C}ech homology of a homologically locally connected filtration consisting of paracompact Hausdorff spaces is 0.
\end{abstract}

\maketitle
\section{Introduction}
Originating from topological data analysis, there has recently been considerable interest in the development of \emph{persistence theory} and in particular \emph{persistent homology}, which is the study of the homological evolution of a \emph{filtration} of spaces. 
Specifically, given a space $X$ and a function $f \colon X \to \R$, which need not be continuous, we obtain a filtration of $X$ by the sublevel sets $f_{\leq t} = f^{-1}(-\infty, t]$ for $t \in \R$. 
Applying a suitable homology theory $\H$ to each sublevel set and the inclusions between them, we obtain a \emph{persistence module} $\H(f_{\leq \bullet})$, i.e., a functor $\mathbb{R} \to \Vect$, where we consider the real numbers as a category via their poset structure.
In favorable cases, persistence modules admit complete invariants called \emph{barcodes} or \emph{persistence diagrams}.
These invariants may then be used to study the original function, aided by the fact that passing from functions to persistence diagrams is a Lipschitz map with respect to suitable metrics.

Given that persistent homology can be used for analyzing sublevel sets of functions, it might not come as a surprise that it can be intimately linked to Morse theory. 
In fact, many of the modern concepts in persistence theory have early precursors in Morse's framework of \emph{functional topology} \cite{Morse.1937, Morse.1938, Morse.1939, Morse.1940}, which he used to study minimal surfaces. 
The connection between persistence theory and functional topology has recently been elaborated on in \cite{Bauer.2022}, with a central theme being that interesting finiteness properties of persistent homology may be ensured by homological local connectedness conditions on the given filtration.

Classically, a single space $X$ is called \emph{(co)homologically locally connected} (or $\HLC$) if for any $x \in X$ and any neighborhood $V$ of $x$ there is a neighborhood $U$ of $x$ with $U \subseteq V$ such that the inclusion $U \to V$ induces the trivial map in (co)homology.
There are several results stating that the homology of a compact Hausdorff space has finite rank if it is locally connected in some sense as above, see for example \cite[Corollary II.17.7]{Bredon.1997}.
This immediately implies that the persistent homology of the sublevel set filtration of a function $f$ with locally connected compact Hausdorff sublevel sets is \emph{pointwise finite dimensional} (or $\PFD$), meaning that $\H(f_{\leq t})$ is finite dimensional for all $t$, which implies that it admits a barcode by Crawley-Boevey's Theorem \cite{Crawley-Boevey.2015, MR4143378}.
However, the assumption that every sublevel set is $\HLC$ is for example not satisfied in Morse's minimal surface setting, while the following weaker one is \cite{Bauer.2022}.

\begin{defi}\label{defi:hlc}
    Let $f \colon X \to \R$ be a function on a topological space. We say that the sublevel set filtration $f_{\leq \bullet}$ induced by $f$ is \emph{homologically locally connected ($\HLC$)} with respect to the homology theory $\H$ if for any $x \in X$, any neighborhood $V$ of $x$, and any pair of indices $s,t$ with $f(x) < s < t$ there is a neighborhood $U$ of $x$ with $U \subseteq V$ such that the inclusion $f_{\leq s} \cap U \to f_{\leq t} \cap V$ is taken to the trivial map by $\H$.  
\end{defi}

By \cite{Bauer.2022}, we know that if the sublevel set filtration of $f$ is $\HLC$ (a slightly weaker condition actually suffices) and all sublevel sets are compact Hausdorff, then the induced map $\H(f_{\leq s}) \to \H(f_{\leq t})$ has finite rank whenever $s < t$. 
Persistence modules with this finiteness property are called \emph{q-tame}, and they are of great interest since they constitute one of the largest known classes of persistence modules that do admit persistence diagrams \cite{Chazal.2016a,Chazal.2016b}. 

Apart from finiteness results as the above ones, $\HLC$ conditions play an important role in comparison results for different homology theories, see \cite{Skljarenko.1980} for a plethora of results in this direction.
As a specific instance, Marde\v{s}i\'{c} \cite{Mardesic.1958} proved that paracompact Hausdorff spaces that are locally connected with respect to singular homology have naturally isomorphic singular and \v{C}ech homology groups.
Similarly to before, applying this to each sublevel set individually implies that the persistent singular and \v{C}ech homologies of a filtration are naturally isomorphic if the constituent sets are all $\HLC$ paracompact Hausdorff spaces.
The goal of this paper is now to provide a generalization of Mardešić’s Theorem in the same vein as the main theorem from \cite{Bauer.2022} for filtrations that only satisfy the $\HLC$ condition as in \cref{defi:hlc}. 

To state the result, we first recall that the \emph{category of persistence modules} is simply the category of functors $\R \to \Vect$, so that a \emph{morphism of persistence modules} $\phi \colon M \to N$ is a natural transformation, i.e., a collection of maps $\phi_t \colon M_t \to N_t$ for $t \in \R$ such that $\phi_t \circ M_{s,t} = N_{s,t} \circ \phi_s$ whenever $s \leq t$.
This category is abelian with kernels, cokernels, etc.\@ given by their pointwise counterparts, i.e., we have $(\ker\phi)_t = \ker(\phi_t)$ and $(\coker\phi)_{t} = \coker(\phi_t)$ for all $t \in \R$.
A morphism of persistence modules is called a \emph{weak isomorphism} if its kernel and cokernel are \emph{ephemeral}, where a persistence module $M$ is called ephemeral if the structure map $M_{s,t} \colon M_s \to M_t$ is trivial whenever $s < t$.
A weak isomorphism may be thought of as weakening the usual notion of isomorphism of persistence modules in the same way as the q-tameness condition weakens the $\PFD$ condition.

\begin{thm}\label{thm:main}
    If $f \colon X \to \R$ induces a filtration of paracompact Hausdorff spaces that is locally connected with respect to singular homology with coefficients in an abelian group $G$, then the natural map $\varphi \colon H_*(f_{\leq \bullet};G) \to \CH_*(f_{\leq \bullet};G)$ from its persistent singular to its persistent \v{C}ech homology is a weak isomorphism.
\end{thm}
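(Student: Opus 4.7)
The plan is to show that both $\ker \varphi$ and $\coker \varphi$ are ephemeral persistence modules, i.e.\@ that for every $s < t$ the induced maps $\ker \varphi_s \to \ker \varphi_t$ and $\coker \varphi_s \to \coker \varphi_t$ vanish. Unpacked, this amounts to two geometric statements: every singular cycle on $f_{\leq s}$ with vanishing \v{C}ech class must bound when pushed forward to $f_{\leq t}$, and every \v{C}ech class on $f_{\leq s}$ must lie in the image of $\varphi_t$ after being pushed forward to $f_{\leq t}$.

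To decouple the two filtration parameters, I would fix an intermediate level $s < r < t$ and exploit the $\HLC$ hypothesis for the pair $(r,t)$. Every $x$ with $f(x) \leq s$ satisfies $f(x) < r$, so the hypothesis provides, for each neighborhood $V$ of $x$ in $X$, a smaller neighborhood $U \subseteq V$ such that the inclusion $f_{\leq r} \cap U \hookrightarrow f_{\leq t} \cap V$ is zero on singular homology; \emph{a fortiori} the same holds with $f_{\leq r}$ replaced by $f_{\leq s}$. Combined with paracompactness of the sublevel sets, this produces, for any open cover of $f_{\leq t}$, a refinement covering $f_{\leq s}$ whose elements are null-homologous in their parents after the passage from level $s$ to level $t$. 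Iterating with a sequence $s < r_0 < r_1 < \cdots < r_n < t$, one obtains covers $\mathcal{U}$ of $f_{\leq s}$ so well adapted that arbitrary intersections $U_{i_0} \cap \cdots \cap U_{i_k}$ up to a prescribed degree $n$ remain $\H_*$-trivial in their parents in a matching cover $\mathcal{V}$ of $f_{\leq t}$.

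On such covers, I would follow Marde\v{s}i\'{c}'s method \cite{Mardesic.1958} to construct, by induction on simplex dimension, a chain-level geometric realization from the nerve $\Nrv(\mathcal{U})$ into the singular chain complex of $f_{\leq t}$: choose a point in each cover element for the vertices, then, using the $\HLC$ refinement, inductively fill the boundary singular chain associated to each higher-dimensional nerve simplex. The resulting chain map realizes any \v{C}ech class on $f_{\leq s}$, represented as a compatible system of nerve cycles, by a singular class on $f_{\leq t}$, yielding cokernel ephemerality. For the kernel part, given a singular cycle on $f_{\leq s}$ whose image in $\Nrv(\mathcal{U})$ bounds for every sufficiently fine $\mathcal{U}$, one first replaces the cycle by its nerve-realized representative (homologous to the original via an acyclic-carrier argument) and then applies the same inductive fillings to produce a bounding chain in $f_{\leq t}$.

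The principal difficulty is the inductive realization step: the chosen fillings must be compatible across refinements of covers, so that they descend to the limit defining \v{C}ech homology, and coherent across simplex dimensions, so that the local fillings assemble into an honest chain map. I expect this to be manageable by fixing a cofinal system of numerable covers of $f_{\leq s}$ and using partitions of unity to make the constructions functorial, essentially carrying out Marde\v{s}i\'{c}'s argument parametrically in the two filtration indices $s$ and $t$ and using $r_0, \dots, r_n$ as a finite ladder of intermediate levels to absorb the HLC losses at each simplex dimension. Once this bookkeeping is in place, the vanishing of both $(\ker \varphi)_{s,t}$ and $(\coker \varphi)_{s,t}$ follows and the weak isomorphism is established.
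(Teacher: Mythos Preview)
Your strategy is essentially the paper's: build a chain-level realization map from the \v{C}ech side on $f_{\leq s}$ into singular chains on $f_{\leq t}$ by climbing a ladder of intermediate levels, then use it to kill both kernel and cokernel. A few technical points where your sketch drifts from what actually works:

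First, the paper works with Vietoris complexes rather than nerves, and the inductive filling step does not rest on acyclicity of \emph{intersections} $U_{i_0}\cap\cdots\cap U_{i_k}$. What is needed is that the already-constructed singular boundary of an $n$-simplex, which is supported in a \emph{star} $\St_{\alpha_{n-1}} U$, bounds in some single element of the next cover $\alpha_n$. This is arranged by demanding that each $\alpha_i$ be an $\HLC$ \emph{star refinement} of $\alpha_{i+1}$ (available by paracompactness), not by controlling finite intersections. Your intersection formulation is the ``good cover'' heuristic and does not follow directly from the $\HLC$ hypothesis as stated.

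Second, partitions of unity are neither used nor needed for compatibility across refinements. The paper instead proves (\cref{lem:independence_for_admissible_maps}) that any two admissible chain maps for comparable admissible sequences of covers induce the same map in homology up to degree $d-1$, via an explicit chain homotopy built by the same star-refinement mechanism. This uniqueness is what lets one pass to the inverse limit over a directed index set of admissible data and obtain a single well-defined map $\psi_{s,t}\colon \CH_{\leq d-1}(f_{\leq s})\to H_{\leq d-1}(f_{\leq t})$.

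Finally, once $\psi_{s,t}$ exists, the paper handles kernel and cokernel simultaneously by showing that the two triangles in \cref{diagram:main} commute (two further chain homotopies, \cref{lem:comm2,lem:comm3}). Your separate ``replace by nerve-realized representative, then fill'' plan for the kernel would work but is subsumed by the commutativity $\psi_{s,t}\circ\varphi_s = H(i_{s,t})$.
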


The construction of the natural map $\varphi$ from singular to \v{C}ech homology will be reviewed later on. 
Note that the persistent homologies appearing in the theorem need not be persistence modules in the sense that we have talked about before, because for general abelian coefficient groups $G$ they will be functors from $\R$ to the category of $G$-modules and not necessarily to a category of vector spaces.
However, the notion of weak isomorphism still makes sense in this more general setting and the result holds as stated.

We have mentioned previously that passing from filtrations to persistence diagrams is Lipschitz with respect to appropriate distances, namely the supremum norm for functions and the bottleneck distance for barcodes. 
The corresponding distance on the intermediate level of persistence modules is the \emph{interleaving distance}.
Given two diagrams $M,N \colon \R \to \mathsf C$, where $\mathsf{C}$ is any category, a $\delta$-interleaving between $M$ and $N$ consists of a pair of natural transformations $(M_t \to N_{t+\delta})_t$ and $(N_t \to M_{t+\delta})_t$ from one diagram to a shifted version of the other and vice versa, which compose to the structure maps $(M_t \to M_{t+2\delta})_t$ and $(N_t \to N_{t+2\delta})_t$.
Clearly, the case $\delta=0$ describes an isomorphism, and the infimum $\delta \geq 0$ for which $M$ and $N$ admit a $\delta$-interleaving is defined as the interleaving distance $d_I(M,N)$ between them.
The interleaving distance between two persistence modules is 0 if there is a weak isomorphism from one to the other, so we obtain the following consequence of \cref{thm:main}.

\begin{cor}\label{cor:main}
     If $f \colon X \to \R$ induces a filtration of paracompact Hausdorff spaces that is locally connected with respect to singular homology with coefficients in an abelian group $G$, then
     \[
        d_I \big( H_*(f_{\leq \bullet};G), \CH_*(f_{\leq \bullet};G) \big) = 0.
     \]
\end{cor}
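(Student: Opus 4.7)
The plan is to reduce \cref{cor:main} to the general principle, mentioned just above its statement in the text, that a weak isomorphism of persistence modules realises interleaving distance zero. Once this principle is established, apply it to the weak isomorphism $\varphi \colon H_*(f_{\leq \bullet};G) \to \CH_*(f_{\leq \bullet};G)$ produced by \cref{thm:main} to conclude.

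So the bulk of what I would actually write is the following lemma-style argument. Let $\varphi \colon M \to N$ be any weak isomorphism of persistence modules (of $G$-modules), with $K = \ker\varphi$ and $C = \coker\varphi$ both ephemeral. Fix $\epsilon > 0$ and an auxiliary $\delta$ with $0 < \delta < \epsilon$; I construct an $\epsilon$-interleaving $(\alpha,\beta)$ explicitly. Set $\alpha_t := N_{t,t+\epsilon} \circ \varphi_t$. To define $\beta_t \colon N_t \to M_{t+\epsilon}$, observe that the composite $N_t \to N_{t+\delta} \to C_{t+\delta}$ agrees with $C_{t,t+\delta}$ precomposed with the quotient $N_t \to C_t$, which vanishes because $C$ is ephemeral. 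Hence $N_{t,t+\delta}(n)$ lies in $\im(\varphi_{t+\delta})$, so it lifts to some $m \in M_{t+\delta}$; set $\beta_t(n) := M_{t+\delta, t+\epsilon}(m)$. Any two lifts differ by an element of $K_{t+\delta}$, which is killed by $K_{t+\delta, t+\epsilon} = 0$, so $\beta_t$ is a well-defined homomorphism.

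The four verifications that $(\alpha, \beta)$ forms an $\epsilon$-interleaving — the two triangle identities $\beta_{t+\epsilon}\alpha_t = M_{t,t+2\epsilon}$ and $\alpha_{t+\epsilon}\beta_t = N_{t,t+2\epsilon}$, together with the compatibility of $\alpha$ and $\beta$ with the structure maps of $M$ and $N$ — are then routine diagram chases, each amounting to choosing a convenient lift and pushing it around. For instance, $M_{t,t+\epsilon+\delta}(m)$ is a valid lift of $\alpha_t(m)$, so the first identity collapses to $M_{t+\epsilon+\delta,t+2\epsilon} \circ M_{t,t+\epsilon+\delta}(m) = M_{t,t+2\epsilon}(m)$. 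Since $\epsilon > 0$ is arbitrary, $d_I(M,N) = 0$.

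The genuine obstacle of this paper is clearly \cref{thm:main}, not the corollary; the reduction above is essentially formal. The one subtle point in the reduction is that ephemerality of $K$ does not furnish a canonical lift, which is why I am forced to introduce the auxiliary shift $\delta < \epsilon$ and absorb the ambiguity into a positive structure map of $M$. This slack is also what explains why the conclusion is $d_I = 0$ rather than a genuine isomorphism, and why the notion of weak isomorphism is exactly the correct replacement for honest isomorphism in the q-tame setting.
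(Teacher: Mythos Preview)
Your reduction is exactly the one the paper indicates: \cref{cor:main} is stated as an immediate consequence of \cref{thm:main} via the general fact that a weak isomorphism forces interleaving distance zero, and the paper does not spell out that fact further. Your auxiliary argument proving this fact (lifting through the ephemeral cokernel after an arbitrarily small shift $\delta$ and absorbing the kernel ambiguity in the remaining $\epsilon-\delta$) is correct and is precisely the standard justification the paper is appealing to.
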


A persistence module that has 0 interleaving distance to a q-tame persistence module is again q-tame, so combining our results with \cite{Bauer.2022}, we obtain that a compact filtration that is $\HLC$ with respect to singular homology is not only q-tame with respect to singular homology, but also with respect to \v{C}ech homology. 
Being q-tame, both persistence modules then admit a persistence diagram, and as a consequence of \cref{cor:main} the two persistence diagrams agree.

Our techniques will allow us to prove a stronger, quantitative version of \cref{cor:main}. It bounds the interleaving distance between singular and \v{C}ech homology in terms of a measurement for how locally connected a filtration is.

\begin{defi}
    Let $f \colon X \to \R$ be a function on a topological space. For $\delta \geq 0$, we say that the sublevel set filtration $f_{\leq \bullet}$ induced by $f$ is \emph{$\delta$ homologically locally connected ($\delta-\HLC$)} with respect to the homology theory $\H$ if for any $x \in X$, any neighborhood $V$ of $x$, and any pair of indices $s,t$ with $f(x) < s \leq s + \delta < t$ there is a neighborhood $U$ of $x$ with $U \subseteq V$ such that the inclusion $f_{\leq s} \cap U \to f_{\leq t} \cap V$ is taken to the trivial map by $\H$.
    We define the \emph{local connectedness shift of $f$ with respect to $\H$} as
    \[
    \lcs_{\H}(f) = \inf \{ \delta > 0 \mid f \text{ is }\delta-\HLC \}.
    \]
\end{defi}

If the filtration induced by a function $f$ is $\HLC$, then its local connectedness shift is $0$. Thus, the following result presents another generalization of \cref{cor:main}.

\begin{thm}\label{thm:main_strong}
    If $f \colon X \to \R$ induces a filtration of paracompact Hausdorff spaces and $G$ is an abelian group, then
     \[
        d_I \big( H_{\leq d-1}(f_{\leq \bullet};G), \CH_{\leq d-1}(f_{\leq \bullet};G) \big) \leq d \cdot \lcs (f) 
     \]
     for all $d$, where $\lcs (f)$ is the local connectedness shift of $f$ with respect to singular homology with coefficients in $G$.
\end{thm}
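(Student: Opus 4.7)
\ The plan is to adapt the nerve-theoretic proof underlying Mardešić's theorem to the filtered setting, tracking the shift accumulated over $d$ successive applications of the $\delta$-$\HLC$ condition. Having fixed any $\delta > \lcs(f)$, it suffices to construct natural backward maps $\psi_s \colon \CH_i(f_{\leq s};G) \to H_i(f_{\leq s+d\delta};G)$ for each $s \in \R$ and each $i \leq d-1$; together with the forward maps obtained from $\varphi$ by postcomposition with a $d\delta$-length structure map, these form a $d\delta$-interleaving, and the theorem follows by letting $\delta \searrow \lcs(f)$.

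The construction of $\psi_s$ proceeds by a telescope of refined open covers. Starting from an arbitrary open cover $\mathcal{U}_0$ of $f_{\leq s+d\delta}$, I would iteratively refine it, using the $\delta$-$\HLC$ condition and paracompactness, to obtain open covers $\mathcal{U}_1, \ldots, \mathcal{U}_d$ of $f_{\leq s+(d-1)\delta}, \ldots, f_{\leq s}$, each equipped with a carrier map $\mathcal{U}_j \to \mathcal{U}_{j-1}$, $U \mapsto U'$ with $U \subseteq U'$. The crucial quantitative property to arrange is that for every $(k{+}1)$-fold intersection $U_{i_0} \cap \cdots \cap U_{i_k}$ in $\mathcal{U}_j$, the inclusion into its image $(k{+}1)$-fold intersection in $\mathcal{U}_{j-1}$ is zero on singular homology in all degrees below $d-k$.

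These covers are compared via the Mayer--Vietoris double complexes $C_{p,q}(\mathcal{U}_j) = \bigoplus_{\sigma \in \Nrv(\mathcal{U}_j)_p} S_q(|\sigma|)$, whose two associated spectral sequences converge to $H_*(f_{\leq s+(d-j)\delta};G)$ and to $H_*(\Nrv(\mathcal{U}_j);G)$ with coefficients in the singular homology of intersections. The triviality condition guarantees that the induced chain map between the double complexes for $\mathcal{U}_j$ and $\mathcal{U}_{j-1}$ annihilates the columns with $q > 0$ in total degrees $\leq d-1$. Iterating from $\mathcal{U}_d$ down to $\mathcal{U}_0$ thereby factors the structure map $H_i(f_{\leq s};G) \to H_i(f_{\leq s+d\delta};G)$ through $H_i(\Nrv(\mathcal{U}_0);G)$ for $i \leq d-1$. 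Since $\CH_i(-;G)$ arises as a limit of nerve homologies over refinements of open covers, this factorization descends to the desired map $\psi_s$, and the interleaving identities follow directly from the telescope.

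The main obstacle is producing, in a single refinement step, a cover whose $(k{+}1)$-fold intersections satisfy the homological triviality demanded above, since the $\delta$-$\HLC$ condition only provides pointwise control over single neighborhoods. I expect this to be handled by an iterated star-refinement argument in the spirit of \cite{Bauer.2022}, possibly combined with passage to a Vietoris complex via Dowker duality in order to upgrade the homological triviality to chain-level compatibility. This chain-level refinement, propagated by the acyclic carrier theorem, is also what is needed to make the $\psi_s$ functorial in $s$, yielding an honest interleaving rather than merely a pointwise comparison at each $s$.
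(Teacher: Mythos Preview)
Your approach via nerves and the Mayer--Vietoris double complex is genuinely different from the paper's. The paper works throughout with \emph{Vietoris} complexes (via Dowker's theorem) and follows Marde\v{s}i\'{c} directly: given an admissible sequence of covers $\alpha_0,\dots,\alpha_d$, it builds an explicit chain map $\lambda \colon \viet^d_*(\alpha_0) \to \sing_*(\alpha_d)$ by induction on skeleta, then constructs two explicit chain homotopies to verify the interleaving triangles, and finally passes to an inverse limit over a directed set of such admissible sequences. No spectral sequence or double complex appears; everything is done by hand at the chain level. The only ingredient needed per refinement step is that $\alpha_i$ be an $\HLC$ \emph{star} refinement of $\alpha_{i+1}$, which a single application of $\delta$-$\HLC$ plus one star refinement in a paracompact Hausdorff space provides.

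The obstacle you flag is real, and it is exactly why the paper avoids nerves. As the paper itself remarks, the neighborhood-based $\HLC$ condition of \cref{defi:hlc} is tailored to Vietoris complexes; nerve-based arguments instead want control over \emph{intersections} of cover elements, which is an equivalent but nontrivially reformulated condition. Concretely, one $\delta$-step plus one star refinement gives you that each $\St_{\alpha_i} U$ includes homologically trivially into some $V \in \alpha_{i+1}$, and this is precisely what the inductive construction of $\lambda$ consumes. By contrast, arranging that every $(k{+}1)$-fold intersection in $\mathcal{U}_j$ maps trivially (in all degrees below $d-k$) into its carrier intersection in $\mathcal{U}_{j-1}$ is not a single-star-refinement statement; obtaining it from the pointwise condition typically needs iterated star refinements within each step, and it is not clear you recover the sharp bound $d \cdot \lcs(f)$ rather than a larger multiple (compare the constants in approximate nerve theorems). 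Your proposed escape---pass to Vietoris complexes via Dowker duality and run an acyclic-carrier argument---is essentially what the paper does from the start; once you make that move the double-complex layer becomes superfluous and the proof collapses to the paper's elementary construction.
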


As mentioned before, for a fixed homology theory, the well-known \emph{stability theorem} \cite{Cohen-Steiner.2007,Chazal.2009,Bauer.2015} asserts that the passage from functions to barcodes of their persistent sublevel set homology is 1-Lipschitz with respect to the supremum norm and the \emph{bottleneck distance}.
For $\PFD$ persistence modules, the passage to barcodes is an isometry with respect to the interleaving and bottleneck distances, respectively.
Hence, \cref{thm:main_strong} can be interpreted as providing a kind of stability result for when the function is fixed but the homology theory varies from singular to \v{C}ech.
Concerning stability with respect to variations of the function, we will also prove that the local connectedness shift $\lcs_{H}$ is 2-Lipschitz for any homology theory $H$ (\cref{prop:lcs_lipschitz}).

Both \cref{thm:main,thm:main_strong} will be proven by constructing approximate inverses for the natural map from singular to \v{C}ech homology.
More specifically, we will show that if $f$ induces a $\delta-\HLC$ filtration then there are natural maps $\psi_{s,t}$ for indices $s$ and $t$ with $t - s > d \cdot \delta$ such that the diagram
\begin{equation}\label{diagram:main}
    \begin{tikzcd}
        H_{\leq d-1}(f_{\leq s};G) \arrow[r] \arrow[d, "\varphi_s"] & H_{\leq d-1}(f_{\leq t};G)\arrow[d,"\varphi_t"] \\
       \CH_{\leq d-1}(f_{\leq s};G) \arrow[r] \arrow[ru,"\psi_{s,t}"] & \CH_{\leq d-1}(f_{\leq t};G)
    \end{tikzcd}
\end{equation}
commutes.
The constructions of the maps $\varphi$ and $\psi$ will happen on the chain level and follow the arguments presented in \cite{Mardesic.1958}.

Other approaches to proving our results could be to use an approximate nerve theorem \cite{Govc.2018} and analyze how the interleaving distance behaves with respect to inverse limits in the category of persistence modules, or to use an approach via cosheaves as in \cite{Bredon.1997}, akin to the usual approach to comparison results between singular and \v{C}ech \emph{co}homology via sheaf cohomology.
In the present work, we do not investigate these possibilities further.

\subsection*{Outline}
We will recall the construction of the natural map $\varphi$ from singular to \v{C}ech homology in \cref{section:singular_to_cech}.
In \cref{section:cech_to_singular}, we will construct the maps $\psi_{s,t}$ from \v{C}ech to singular homology for suitable indices $s < t$.
Next, we prove commutativity of \cref{diagram:main} by constructing two chain homotopies in \cref{section:homotopy_singular,section:homotopy_cech}.
We use these results to prove \cref{thm:main,thm:main_strong} in \cref{section:final_proof}.
Finally, we show that the local connectedness shift is a 2-Lipschitz map in \cref{sec:lcs_lipschitz}.

\subsection*{Notation and Conventions}
Throughout, we will fix an abelian coefficient group $G$ for all chain complexes and homology groups, which will not be referenced in the notation anymore.

\section{The map from singular to \v{C}ech homology}
\label{section:singular_to_cech}
We recall the definition of \v{C}ech homology in terms of Vietoris complexes of covers, how singular homology may also be defined depending on a cover, and how this leads to a natural map from singular to \v{C}ech homology.

A \emph{cover} of a topological space $X$ is a set $\alpha$ consisting of open subsets of $X$ such that $X = \bigcup_{U \in \alpha} U$.
The set of all covers will be denoted by $\Cov(X)$. 
It is a directed set with respect to the \emph{refinement} relation, where a cover $\alpha$ of a space $A \subseteq X$ is said to \emph{refine} a cover $\beta$ of $X$ if for all $U \in \alpha$ there exists $V \in \beta$ with $U \subseteq V$.
For a cover $\alpha$, we define its \emph{Vietoris complex} to be the simplicial complex given by
\[
    \Vietoris (\alpha) = \left\{ \rho \subseteq X \mid \rho \text{ is finite and } \rho \subseteq U \text{ for some } U \in \alpha \right\}.
\]
We write $\Vietoris^d(\alpha)$ for the subcomplex of simplices with dimension at most $d$.
We will write $\viet_{*}(\alpha)$ for the simplicial chain complex of $\Vietoris(\alpha)$ and $\viet^{d}_{*}(\alpha)$ for the simplicial chain complex of $\Vietoris^{d}(\alpha)$.
Simplices in $\Vietoris(\alpha)$ given by points $v_{0},\dots,v_{n} \in U \in \alpha$ will be denoted by $\{v_{0},\dots,v_{n}\}$, and \emph{oriented} simplices in $\viet_{*}(\alpha)$ by $[v_{0},\dots,v_{n}]$.
If $\alpha$ refines $\beta$, there is an obvious simplicial map $\pi \colon \Vietoris(\alpha) \to \Vietoris(\beta)$, and after applying simplicial homology we obtain an inverse system of abelian groups.
The \emph{\v{C}ech homology} of $X$ is then defined as the limit of this diagram, i.e.,
\[
    \CH_*(X) \ = \lim_{\alpha \in \Cov(X)} H(\viet_{*}(\alpha)).
\]
Note that this is not the classical definition of \v{C}ech homology as presented for example in \cite{Eilenberg.1952}, but the definition above agrees with the classical one by Dowker's Theorem \cite{Dowker.1952}.

\begin{rem}
    The definition of \v{C}ech homology in terms of Vietoris complexes that we use works particularly well in conjunction with the $\HLC$ condition in terms of refining neighborhoods as presented in the introduction.
    There is an equivalent way of stating the $\HLC$ condition in terms of intersections of neighborhoods \cite{DePauw.2007} akin to the notion of good cover that is used for Nerve Theorems \cite{Govc.2018,Bauer.2022a}. 
    Morally, this alternative characterization of local connectedness works better in conjunction with the classical definition of \v{C}ech homology in terms of nerves of covers.
\end{rem}

Denoting by $\sing_*(X)$ the singular chain complex of $X$, we define $\sing_{*}(\alpha)$ for some cover $\alpha$ of $X$ to be the subcomplex of $\sing_*(X)$ generated by those singular simplices whose image is contained in some set $U \in \alpha$. 
The union of all images of singular simplices making up a singular chain will be called the \emph{support} of the chain.
If $\alpha$ refines $\beta$ there is an inclusion of chain complexes $\eta \colon \sing_*(\alpha) \to \sing_{*}(\beta)$.
The inclusion $\sing_*(\alpha) \to \sing_*(\{X\}) = \sing_*(X)$ can be shown to induce an isomorphism in homology for all $\alpha \in \Cov(X)$ \cite{Eilenberg.1952}, with inverse given by a subdivision construction. In particular, we get an isomorphism
\[
    H_*(X) \ \cong \lim_{\alpha \in \Cov(X)} H(\sing_{*}(\alpha)),
\]
where $H_*(X)$ denotes the singular homology of $X$.

Let $e_0, \dots, e_n$ be the vertices of the standard $n$-simplex. 
For any $\alpha \in \Cov(X)$, we obtain a chain map $\mu \colon \sing_*(\alpha) \to \viet_{*}(\alpha)$ by sending a singular $n$-simplex $\sigma$ to the oriented Vietoris simplex $[\sigma(e_0), \dots, \sigma(e_n)]$ if the $\sigma(e_i)$ are all distinct and to 0 otherwise.
These chain maps are natural with respect to refinement, so they give rise to a map
\[
    H_*(X) \cong \lim_{\alpha \in \Cov(X)} H(\sing_{*}(\alpha)) \to \lim_{\alpha \in \Cov(X)} H(\viet_{*}(\alpha)) \cong \CH_*(X),
\]
which can be shown to also be natural in the argument $X$. 
We obtain a map 
$
    \varphi \colon H_*(f_{\leq \bullet}) \to \CH_*(f_{\leq \bullet})
$
from the persistent singular to the persistent \v{C}ech homology of the sublevel set filtration of a real-valued function $f$.

\section{A map from \v{C}ech to singular homology}
\label{section:cech_to_singular}
We will now show how covers satisfying certain local triviality assumptions can be used to construct maps from \v{C}ech homology to singular homology in filtered spaces.

If $\alpha$ is a cover of a topological space $X$ and $U \in \alpha$, we define the \emph{star} of $U$ with respect to $\alpha$ as 
\[
\St_{\alpha} U = \bigcup_{\substack{U' \in \alpha \\ U \cap U' \neq \emptyset}} U' \subseteq X.
\]
A cover $\alpha$ is called a \emph{star refinement} of another cover $\beta$ if for all $U \in \alpha$ there exists $V \in \beta$ with $\St_{\alpha} U \subseteq V$.
We say that the star refinement is $\HLC$ with respect to $\tilde{H}$ if $V$ can be chosen such that the inclusion $\St_{\alpha} U \to V$ becomes trivial after applying $\tilde{H}$.


\begin{defi}\label{defi:admissibility}
    Let $A_0 \subseteq A_1 \subseteq \dots \subseteq A_d \subseteq X$ be topological spaces. 
    A sequence of covers $\alpha_i$ of $A_i$ is called \emph{admissible} if $\alpha_i$ is an $\HLC$ star refinement of $\alpha_{i+1}$ with respect to reduced singular homology for all $i$.
    Given an admissible sequence of subspace covers, a chain map $\lambda \colon \viet^d_*(\alpha_0) \to \sing_*(\alpha_d)$ is called \emph{admissible} if for any $0$-Vietoris simplex $\{v\}$, $\lambda(\{v\})$ is a singular $0$-simplex taking the value $v$ and if for any oriented $n$-simplex $\rho$ in $\viet_{*}^{d}(\alpha_{0})$ there exists $V \in \alpha_{n}$ such that the support of the singular $n$-chain $\lambda(\rho)$ and all vertices of $\rho$ are contained in $V$.
\end{defi}

For our later application to the sublevel set filtration of a function $f \colon X \to \R$, one may think of the sets $A_i$ in the definition above as sublevel sets of $f$. 

\begin{lem}\label{lem:cech_to_singular}
    Let $A_0 \subseteq A_1 \subseteq \dots \subseteq A_d \subseteq X$ be topological spaces and let $(\alpha_i)_i$ be an admissible sequence of covers. 
    Then there exists an associated admissible chain map $\lambda \colon \viet^d_*(\alpha_0) \to \sing_*(\alpha_d)$.
\end{lem}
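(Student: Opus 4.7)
The plan is to construct $\lambda$ by induction on dimension $n = 0, 1, \ldots, d$, defining it on each $n$-simplex of $\viet^d_*(\alpha_0)$ and extending linearly. This is a Mardešić-style filling argument that successively consumes the local triviality built into the admissible sequence.

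For the base case $n = 0$, set $\lambda(\{v\})$ to be the constant singular $0$-simplex at $v$; since $v \in U$ for some $U \in \alpha_0$, this satisfies the admissibility condition at dimension $0$. For the inductive step, fix $1 \leq n \leq d$ and assume $\lambda$ has been defined on the $(n-1)$-skeleton, is a chain map in those degrees, and is admissible there. Given $\rho = [v_0, \ldots, v_n]$, the inductive hypothesis supplies, for each face $\partial_i \rho$, a set $V_i \in \alpha_{n-1}$ containing the support of $\lambda(\partial_i \rho)$ together with all vertices of $\partial_i \rho$. The chain $c := \lambda(\partial \rho) = \sum_{i=0}^{n} (-1)^i \lambda(\partial_i \rho)$ is a cycle, since $\partial c = \lambda(\partial^2 \rho) = 0$; in the case $n = 1$ its augmentation also vanishes, because it equals the difference of two constant $0$-simplices.

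The key combinatorial step is to find $V \in \alpha_n$ with $\bigcup_i V_i \subseteq V$ such that the inclusion trivialises reduced singular homology. For $n \geq 2$, any two of the $V_i, V_j$ share a vertex $v_k$ with $k \notin \{i,j\}$, so $V_i \subseteq \St_{\alpha_{n-1}} V_0$ for every $i$, and the HLC star refinement hypothesis applied to $V_0 \in \alpha_{n-1}$ produces a $V \in \alpha_n$ with $\St_{\alpha_{n-1}} V_0 \subseteq V$ and the required triviality property. For $n = 1$, one uses instead that $v_0, v_1$ lie in a common $U \in \alpha_0$ (because $\rho$ is a Vietoris simplex for $\alpha_0$) and chooses $V_0 = V_1 = U$ in the inductive hypothesis, which is permitted by the existential quantifier in the admissibility condition; then the HLC star refinement $\alpha_0 \to \alpha_1$ applied to $U$ produces the desired $V \in \alpha_1$. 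In either case, $c$ is supported in $V$ and represents zero in $\tilde H_{n-1}(V)$, so there exists a singular $n$-chain $b \in \sing_n(V)$ with $\partial b = c$; define $\lambda(\rho) := b$ and extend linearly.

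Admissibility at dimension $n$ is then immediate: the support of $\lambda(\rho)$ lies in $V$ by construction, and every vertex $v_j$ of $\rho$ belongs to some $V_i$ with $i \neq j$, hence to $V$. The chain map identity $\partial \lambda(\rho) = \lambda(\partial \rho)$ holds by the very choice of $b$. The main obstacle I anticipate is precisely the combinatorial step of packaging all the face supports into a single element of $\alpha_n$; this is what makes the star refinement (rather than plain refinement) essential in the admissibility hypothesis, and it is what forces the slightly separate treatment of $n = 1$ from $n \geq 2$.
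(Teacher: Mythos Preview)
Your proposal is correct and follows essentially the same approach as the paper's proof: an induction on simplex dimension, with $\lambda$ defined on $0$-simplices as the obvious constant map, the case $n=1$ handled via a common $U \in \alpha_0$ containing both vertices, and the case $n \geq 2$ handled by showing that the supports $V_i$ of the faces all meet $V_0$ (via a shared vertex $v_k$, $k \notin \{0,i\}$) so that everything lies in $\St_{\alpha_{n-1}} V_0$, after which the $\HLC$ star refinement produces the required $V \in \alpha_n$. The only differences are organisational---the paper writes out $n=1$ as a separate paragraph before the general inductive step rather than folding it into the same framework---and in notation.
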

\begin{proof}
    We define the desired map $\lambda$ on oriented Vietoris simplices and extend linearly. 
    We also proceed inductively in the dimension of the simplices.
    
    For a $0$-simplex $\{v\}$, we set $\lambda(\{v\})$ to be the singular simplex with value $v$.
    For an oriented $1$-simplex $[v_0,v_1] \in \viet^d_*(\alpha_0)$, we choose $U \in \alpha_0$ with $v_0, v_1 \in U$. 
    The cover $\alpha_0$ is an $\HLC$ star refinement of $\alpha_1$, so we may choose $V \in \alpha_1$ such that the inclusion $U \subseteq V$ is trivial in singular homology. 
    This means that the augmented singular $0$-cycle $c = \lambda(\partial([v_0,v_1])) =  \lambda([v_0]) - \lambda([v_1]) \in \sing_{*} (U)$ becomes a boundary when considered as a chain in $V$, so there exists a 1-chain $c' \in \sing_*(V) \subseteq \sing_{*}(\alpha_{1})$ such that $\partial(c') = c$.
    We set $\lambda([v_0,v_1]) = c'$.
    
    Now, inductively, assume that $\lambda$ has been defined for oriented $(n-1)$-simplices and consider an oriented $n$-simplex $\rho = [v_0,\dots,v_n]$, where $2 \leq n \leq d$.
    Let $\rho_i = [v_0,\dots,\hat{v_i},\dots,v_n]$ be the $i$-th boundary component of $\rho$.
    By our inductive assumption, we have singular $(n-1)$-chains $\lambda(\rho_i) \in \sing_{*}(\alpha_{d})$, and using admissibility we may choose sets $U_i \in \alpha_{n-1}$ such that the support of $\lambda(\rho_i)$, as well as the points $v_{j}$ for $j \neq i$ are contained in $U_i$.
    Because we assume $n \geq 2$ we can always choose $j \notin \{0, i\}$ to obtain $v_j \in U_0 \cap U_i$, so we must have $U_0 \cap U_i \neq \emptyset$ for all $i$.
    It follows that $U_i \subseteq \St_{\alpha_{n-1}} U_0$ so that the support of $\lambda(\rho_i)$ is contained in $\St_{\alpha_{n-1}} U_0$ for all $i$.
    Since $\alpha_{n-1}$ is an $\HLC$ star refinement of $\alpha_{n}$ we may choose $V \in \alpha_{n}$ such that $\St_{\alpha_{n-1}} U_0 \subseteq V$ and the corresponding inclusion map is trivial in singular homology.
    Hence, the $(n-1)$-cycle $c = \lambda(\partial(\rho)) =  \sum_i (-1)^i \lambda(\rho_i) \in \sing_{*}(\St_{\alpha_{n-1}} U_0)$ becomes a boundary when considered as a chain in $V$, i.e., there exists a singular $n$-chain $c' \in \sing_*(V) \subseteq \sing_{*}(\alpha_{n})$ such that $\partial(c') = c$.
    Setting $\lambda(\rho) = c'$ yields a chain map as desired.
\end{proof}

Given a filtration of spaces as above, there might be many choices of admissible subspace covers, and for each such choice there might again be many choices of admissible chain maps. 
We will now show that the result in homology is in some sense independent of these choices.

\begin{lem}\label{lem:independence_for_admissible_maps}
    Let $A_0 \subseteq A_1 \subseteq \dots \subseteq A_d \subseteq X$ and $A'_0 \subseteq A'_1 \subseteq \dots \subseteq A'_d \subseteq X$ be topological spaces with $A_i \subseteq A'_i$ for all $i$ and assume we are given admissible sequences of subspace covers $\alpha_i$ of $A_i$ and $\alpha'_i$ of $A'_i$ in $X$ such that $\alpha_i$ refines $\alpha'_i$ for all $i$.
    Further, assume we are given admissible chain maps $\lambda$ and $\lambda'$ associated to the sequences $\alpha_i$ and $\alpha'_i$, respectively.
    Then 
    the diagram
    \[
        \begin{tikzcd}
            H_{\leq d-1}(\sing_*(\alpha_{d})) \arrow[r, "H(\eta)"] & H_{\leq d-1}(\sing_{*}(\alpha'_{d})) \\
            H_{\leq d-1}(\viet_{*}^d(\alpha_0)) \arrow[r, "H(\pi)"] \arrow[u, "H(\lambda)"] & H_{\leq d-1}(\viet_{*}^d(\alpha'_0)) \arrow[u, "H(\lambda')"]
        \end{tikzcd}
    \]
    commutes.
\end{lem}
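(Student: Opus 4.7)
The plan is to construct a chain homotopy $h \colon \viet^d_*(\alpha_0) \to \sing_{*+1}(\alpha'_d)$ between $\eta \circ \lambda$ and $\lambda' \circ \pi$, defined on $n$-simplices for $n = 0, 1, \ldots, d-1$. That is, for every such $n$-simplex $\rho$, we aim for
\[
    \partial h(\rho) + h(\partial \rho) = \eta(\lambda(\rho)) - \lambda'(\pi(\rho)).
\]
Once this is in hand, applying it to an arbitrary $n$-cycle $z$ in $\viet^d_*(\alpha_0)$ with $n \leq d - 1$ gives $\eta\lambda(z) - \lambda'\pi(z) = \partial h(z)$ as chains in $\sing_*(\alpha'_d)$, which is exactly the commutativity of the diagram on $H_n$.

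The construction is inductive in the dimension and closely parallels the construction of $\lambda$ in \cref{lem:cech_to_singular}. In the base case $n = 0$, both $\eta(\lambda(\{v\}))$ and $\lambda'(\pi(\{v\}))$ are the constant singular $0$-simplex at $v$, so we set $h(\{v\}) = 0$. To make the induction go through I will strengthen the hypothesis with an admissibility-like clause for $h$: for every $n$-simplex $\rho$ with $n \leq d-1$, the $(n+1)$-chain $h(\rho)$ has support contained, together with the vertices of $\rho$, in some $V \in \alpha'_{n+1}$.

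In the inductive step for $\rho = [v_0, \ldots, v_n]$, admissibility of $\lambda$ and $\lambda'$ combined with the refinement $\alpha_n \to \alpha'_n$ produces sets $V_\lambda, V_{\lambda'} \in \alpha'_n$ each containing every vertex $v_j$ together with the support of $\eta\lambda(\rho)$ and $\lambda'\pi(\rho)$, respectively. By the strengthened inductive hypothesis, each $h(\rho_i)$ is supported inside some $W_i \in \alpha'_n$ containing every $v_j$ with $j \neq i$. Since $V_\lambda$ already contains all vertices of $\rho$, it intersects each of $V_{\lambda'}$ and $W_i$, so all of these sets lie in $\St_{\alpha'_n} V_\lambda$. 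A direct computation using the inductive chain homotopy equation on $\partial \rho$ shows that
\[
    c = \eta\lambda(\rho) - \lambda'\pi(\rho) - h(\partial \rho)
\]
is an $n$-cycle (an augmented $0$-cycle when $n = 0$) supported inside $\St_{\alpha'_n} V_\lambda$. Because $\alpha'_n$ is an $\HLC$ star refinement of $\alpha'_{n+1}$, we can pick $V''' \in \alpha'_{n+1}$ with $\St_{\alpha'_n} V_\lambda \subseteq V'''$ such that the inclusion is trivial in reduced singular homology; hence $c$ bounds an $(n+1)$-chain in $\sing_*(V''')$, which we declare to be $h(\rho)$. This preserves the admissibility clause for $h$ and gives the desired chain homotopy identity on $\rho$.

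The main technical obstacle is precisely this bookkeeping: ensuring that at every stage the chains $\eta\lambda(\rho)$, $\lambda'\pi(\rho)$ and $h(\partial\rho)$ all fit inside a single star of an element of $\alpha'_n$, so that the $\HLC$ star refinement $\alpha'_n \to \alpha'_{n+1}$ can be invoked. The cutoff at degree $d-1$ appears naturally, since constructing $h$ on an $n$-simplex requires the star refinement from $\alpha'_n$ to $\alpha'_{n+1}$, which is guaranteed only for $n+1 \leq d$ — exactly the range in which we obtain commutativity.
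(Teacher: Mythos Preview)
Your proposal is correct and follows essentially the same approach as the paper's proof: both construct a chain homotopy $D$ (your $h$) inductively on simplices of dimension at most $d-1$, carrying along an admissibility clause that $D(\rho)$ together with the vertices of $\rho$ lies in some element of $\alpha'_{n+1}$, and then invoke the $\HLC$ star refinement $\alpha'_n \to \alpha'_{n+1}$ to fill the cycle $c = \eta\lambda(\rho) - \lambda'\pi(\rho) - D(\partial\rho)$. Your choice to center the star at $V_\lambda$ (which contains all vertices) rather than at $U_0$ is a mild streamlining that lets you treat $n=1$ and $n\geq 2$ uniformly, whereas the paper handles the $1$-simplex case separately; otherwise the arguments coincide.
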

\begin{proof}
	We prove the claim by constructing a suitable chain homotopy for the diagram above, i.e., a map $D \colon \viet_{*}^{d-1}(\alpha_{0}) \to \sing_{*}(\alpha'_{d})$ such that $D \circ \partial + \partial \circ D = \eta \circ \lambda - \lambda' \circ \pi$.
	Note that it suffices to construct $D$ on the $(d-1)$-skeleton because we only want to show commutativity of the diagram in homology up to dimension $d-1$.
	In particular, the existence of $D$ on the $(d-1)$-skeleton already implies that  $\eta \circ \lambda - \lambda' \circ \pi$ maps $(d-1)$-cycles to $(d-1)$-boundaries.
	
	As for the construction of admissible chain maps in the proof of \cref{lem:cech_to_singular}, we will perform the construction of $D$ only on oriented simplices and proceed inductively on their dimension.
	As part of our induction hypothesis, and similarly to the admissibility condition for chain maps from \cref{defi:admissibility}, we will require that for any $n$-simplex $\rho$ of $\Vietoris^{d-1}(\alpha_{0})$ there exists $U \in \alpha'_{n+1}$ such that the support of the singular $(n+1)$-chain $D(\rho)$, as well as the vertices of $\rho$ are contained in $U$.
	
	For a $0$-simplex $\{v\}$, we set $D(\{v\}) = 0$, which clearly satisfies the required conditions because $\lambda(\{v\})$ and $\lambda'(\{v\})$ are both singular $0$-simplices that take value $v$.
	Now consider an oriented $1$-simplex $[v_0,v_1] \in \viet^{d-1}_*(\alpha_0)$.
	Because $\lambda$ and $\lambda'$ are admissible, there exist $U \in \alpha_{1}$ and $U' \in \alpha'_{1}$ such that $U$ contains the support of $\eta(\lambda([v_{0}, v_{1}]))$, $U'$ contains the support of $\lambda'(\pi([v_{0}, v_{1}]))$, and $v_{0},\ v_{1} \in U \cap U'$.
	Since $\alpha_{1}$ refines $\alpha'_{1}$, we can choose $U'' \in \alpha'_{1}$ such that $U \subseteq U''$.
	In particular, we then have $U'' \cap U' \neq \emptyset$, so $U'' \cup U' \subseteq \St_{\alpha'_{1}} U'$. 
	By assumption, $\alpha'_{1}$ is an $\HLC$ star refinement of $\alpha'_{2}$, so we may choose $V \in \alpha'_{2}$ such that $\St_{\alpha'_{1}} U' \subseteq V$ and the inclusion map is trivial in singular homology.
	Since $c = \eta(\lambda([v_{0}, v_{1}])) - \lambda'(\pi([v_{0}, v_{1}]))$ is a singular 1-cycle supported in $\St_{\alpha'_{1}} U'$ (which can be verified by an easy direct computation), this implies that there is a singular 2-chain $c' \in \sing_{*}(V) \subseteq \sing_{*}(\alpha'_{2})$ such that $\partial(c') = c$.
	Setting $D([v_{0}, v_{1}]) = c'$ then satisfies the above requirements.
	In particular, we can calculate \[\partial(D([v_{0}, v_{1}])) + D(\partial([v_{0}, v_{1}])) = \partial(c') + 0 = c = \eta(\lambda([v_{0}, v_{1}])) - \lambda'(\pi([v_{0}, v_{1}])).\]
	
	Next, assume that $D$ has been defined for oriented $(n-1)$-simplices and consider an oriented $n$-simplex $\rho = [v_0,\dots,v_n] \in \viet^{d-1}_*(\alpha_0)$, where $2 \leq n$, with boundary components $\rho_{i}$.
	As before, we may use the admissibility of $\lambda$ and $\lambda'$ to choose $U \in \alpha_{n}$ and $U' \in \alpha'_{n}$ such that $U$ contains the support of $\eta(\lambda(\rho))$, $U'$ contains the support of $\lambda'(\pi(\rho))$, and both $U$ and $U'$ contain the vertices of $\rho$.
	Again, we choose some $U'' \in \alpha'_{n}$ such that $U \subseteq U''$.
	Using the induction hypothesis, we may also choose $U_{i} \in \alpha'_{n}$ such that the support of $D(\rho_{i})$ and all of the vertices of $\rho_{i}$ are contained in $U_{i}$ for all $i$.
	Because we assume $n \geq 2$ we can always choose $j \notin \{0, i\}$ to obtain $v_j \in U_0 \cap U_i$, so we must have $U_0 \cap U_i \neq \emptyset$ for all $i$.
	It follows that $U_i \subseteq \St_{\alpha'_{n}} U_0$.
	We also have that $v_{n} \in U'' \cap U_{0}$ and $v_{n} \in U' \cap U_{0}$, so we get $U'', U' \subseteq \St_{\alpha'_{n}} U_0$, too.
	In total, we obtain that the singular $n$-cycle $c = \eta(\lambda(\rho)) - \lambda'(\pi(\rho)) - D(\partial(\rho))$
	has support in $\St_{\alpha'_{n}} U_0$.
	Since $\alpha'_{n}$ is an $\HLC$ star refinement of $\alpha'_{n+1}$ we may now choose $V \in \alpha'_{n+1}$ such that $\St_{\alpha'_{n}} U_0 \subseteq V$ and the corresponding inclusion map is trivial in singular homology.
	Thus, there is some $(n+1)$-chain $c' \in \sing_{*}(V) \subseteq \sing_{*}(\alpha'_{n+1})$ with $\partial(c') = c$.
	Setting $D(\rho) = c'$, we calculate \[\partial(D(\rho)) + D(\partial(\rho)) = \partial(c') + D(\partial(\rho)) = c + \eta(\lambda(\rho)) - \lambda'(\pi(\rho)) - c = \eta(\lambda(\rho)) - \lambda'(\pi(\rho)),\] which finishes the proof.
\end{proof}

Note that \cref{lem:independence_for_admissible_maps} can in particular be applied in the case where $A_i = A'_i$ and $\alpha_i = \alpha'_i$, which implies that the maps $\eta$ and $\pi$ are identities and hence two choices of associated chain maps $\lambda$ and $\lambda'$ for the same sequence of covers yield identical maps in homology up to dimension $d-1$.

Coming back to our setting of functions $f \colon X \to \R$, we now consider a dimension $d$ and indices $s < t$ satisfying $t - s > d \cdot \lcs_{\mathrm{singular}}(f)$ to define

\begin{align*}
    \indexSet_{s,t,d} = \{ (t_0,\dots,t_{d},\alpha_0,\dots,\alpha_{d}) \mid &
                            t_i \in \R, 
                            t_0 = s, 
                            t_d = t, \\ &
                            t_{i+1} - t_i > \lcs_{\mathrm{singular}}(f)\text{ for all }i, \\ &
                            (\alpha_i)_i \text{ admissible covers for } f_{\leq t_0} \subseteq \dots \subseteq f_{\leq t_d} \subseteq X
                        \}.
\end{align*}

We define an order $\leq$ on $\indexSet_{s,t,d}$ by saying that $(t_0,\dots,\alpha_{d}) \leq (t'_0,\dots,\alpha'_{d})$ if and only if $f_{\leq t_i} \subseteq f_{\leq t'_i}$ and $\alpha_i$ refines $\alpha'_i$ for all $i$.

\begin{lem}\label{lem:directedness}
    Let $f \colon X \to \R$ be a function whose sublevel sets are paracompact Hausdorff spaces, and consider a dimension $d$ and indices $s < t$ satisfying $t - s > d \cdot \lcs_{\mathrm{singular}}(f)$.
    Then $(\indexSet_{s,t,d},\leq)$ is a non-empty directed set.
\end{lem}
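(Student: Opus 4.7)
My plan is to prove the two assertions---non-emptiness and directedness---independently, both by explicit constructions that combine (i) the $\delta$-HLC property of $f$, valid for any $\delta > \lcs(f)$ by the infimum definition, with (ii) Stone's theorem, which ensures that every open cover of a paracompact Hausdorff space admits an open star refinement.

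For non-emptiness I would take $t_i := s + i(t-s)/d$, so that $t_{i+1}-t_i = (t-s)/d > \lcs(f)$ by the standing hypothesis, and fix some $\delta$ strictly between $\lcs(f)$ and $(t-s)/d$; then $f$ is $\delta$-HLC. The admissible covers $(\alpha_i)$ are then built by downward induction on $i$, with $\alpha_d := \{X\}$ at the top. In the inductive step, for each $x \in f_{\leq t_i}$ and each $V \in \alpha_{i+1}$ with $x \in V$, I choose auxiliary indices $s' \in (t_i, t_i + \delta)$ and $t' \in (s'+\delta, t_{i+1})$ (possible because $t_{i+1}-t_i > \delta$), and apply $\delta$-HLC to extract a neighborhood $U_{x,V} \subseteq V$ of $x$ such that $f_{\leq s'} \cap U_{x,V} \to f_{\leq t'} \cap V$ is trivial in reduced singular homology. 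Factoring through the inclusions $f_{\leq t_i} \subseteq f_{\leq s'}$ and $f_{\leq t'} \subseteq f_{\leq t_{i+1}}$ then shows the inclusion $f_{\leq t_i} \cap U_{x,V} \to f_{\leq t_{i+1}} \cap V$ is also trivial in $\tilde{H}$. Applying Stone's theorem to the resulting open cover $\{U_{x,V}\}$ of $f_{\leq t_i}$ yields the desired HLC star refinement $\alpha_i$.

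For directedness, given two elements $(t_i,\alpha_i), (t'_i,\alpha'_i) \in \indexSet_{s,t,d}$, I set $t''_i := \max(t_i,t'_i)$. A brief case analysis, using the strict monotonicity of the two original sequences and considering separately which sequence realizes the maximum at indices $i$ and $i+1$, verifies $t''_{i+1} - t''_i > \lcs(f)$ throughout. The covers $\alpha''_i$ are built by downward induction starting from $\alpha''_d := \alpha_d \cup \alpha'_d$, which both $\alpha_d$ and $\alpha'_d$ trivially refine. For the inductive step, supposing $\alpha''_{i+1}$ has been constructed to be refined by $\alpha_{i+1}$ and $\alpha'_{i+1}$, the key observation is that for each $U \in \alpha_i$ the star $\St_{\alpha_i} U$ lies in some $V \in \alpha_{i+1}$ with trivial inclusion in $\tilde{H}$ (admissibility of $(\alpha_i)$), and $V$ lies in some $V'' \in \alpha''_{i+1}$, so the composite $\St_{\alpha_i} U \to V''$ is still trivial in $\tilde{H}$; analogously for each $U' \in \alpha'_i$. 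I would then construct $\alpha''_i$ via a Stone star refinement of an auxiliary cover combining these enlargements $\St_{\alpha_i} U$ and $\St_{\alpha'_i} U'$ with fresh HLC neighborhoods as in the non-emptiness construction, arranged so that each $U \in \alpha_i$ is contained in a single element of $\alpha''_i$.

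The main obstacle is the inductive step of the directedness argument: Stone star refinement naturally produces a cover \emph{finer} than the pre-refinement one, whereas the refinement condition $\alpha_i$ refines $\alpha''_i$ calls for $\alpha''_i$ to contain \emph{supersets} of each $U \in \alpha_i$. Reconciling these opposing requirements is nontrivial: $\alpha''_i$ must be coarse enough to absorb $\alpha_i$ and $\alpha'_i$, yet fine enough to satisfy the star refinement property in $\alpha''_{i+1}$. The enlargements $\St_{\alpha_i} U$ and $\St_{\alpha'_i} U'$ are the crucial intermediate objects---large enough to contain the corresponding $U$, $U'$, yet HLC-trivial in $\alpha''_{i+1}$---and the proof should manage the combinatorial bookkeeping of how these interact, presumably via local finiteness from paracompactness, to ensure the existence of a suitable $\alpha''_i$.
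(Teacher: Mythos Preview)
Your non-emptiness argument is essentially the paper's: pick evenly spaced $t_i$, start from an arbitrary $\alpha_d$, and descend by first producing an HLC refinement from the $\delta$-HLC hypothesis and then star-refining via Stone's theorem. That part is fine.

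The directedness argument, however, goes in the wrong direction. Recall the order: $(t_0,\dots,\alpha_d) \leq (t'_0,\dots,\alpha'_d)$ means $f_{\leq t_i} \subseteq f_{\leq t'_i}$ and $\alpha_i$ \emph{refines} $\alpha'_i$. What the inverse-limit construction requires, and what the paper actually establishes, is the existence of common \emph{lower} bounds---an element with $t''_i \leq \min(t_i,t'_i)$ and covers $\alpha''_i$ that refine both $\alpha_i$ and $\alpha'_i$. You instead set $t''_i := \max(t_i,t'_i)$ and attempt to make $\alpha''_i$ \emph{coarser} than both, i.e., you are building a common upper bound. The obstacle you flag---that Stone's theorem produces finer covers while you need coarser ones---is real for that direction, and your sketch does not resolve it: there is no mechanism ensuring that the enlarged sets $\St_{\alpha_i}U$, $\St_{\alpha'_i}U'$ (which need not even cover $f_{\leq t''_i}$ when $t''_i$ exceeds one of $t_i,t'_i$) can be organized into a cover that is simultaneously an HLC star refinement of $\alpha''_{i+1}$ and absorbs every element of both $\alpha_i$ and $\alpha'_i$.

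The remedy is simply to reverse direction. Take $t''_i := \min(t_i,t'_i)$; the same case analysis you give for $\max$ shows the gap condition persists. Let $\alpha''_d$ be any common refinement of $\alpha_d$ and $\alpha'_d$. Then, inductively, build an HLC star refinement $\beta_i$ of $\alpha''_{i+1}$ exactly as in your non-emptiness step, and set $\alpha''_i$ to be a common refinement of $\beta_i$, $\alpha_i$, and $\alpha'_i$. The key point is that passing to a further refinement preserves the property of being an HLC star refinement of a fixed target, so $\alpha''_i$ is automatically admissible. All of the tension you identified evaporates once you look for lower bounds rather than upper bounds.
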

\begin{proof}
	It is clear that the relation $\leq$ defines a preorder because the refinement order for covers of a single space defines a preorder.
	
	For non-emptyness, we proceed inductively. 
	Start with an arbitrary choice of $t_{i}$ with $t_0 = s$, $t_d = t$, $t_{i+1} - t_i > \lcs_{\mathrm{singular}}(f)$, and an arbitrary choice of $\alpha_{d} \in \Cov(f_{\leq t})$. 
	Next, given $\alpha_{i+1}$, choose $\alpha'_i$ as a cover of $f_{\leq t_{i}}$ such that for every $U' \in \alpha'_i$ there exists $V \in \alpha_{i+1}$ with $U' \subseteq V$ and such that the inclusion $f_{\leq t_i} \cap U' \to f_{\leq t_{i+1}} \cap V$ is trivial for $\tilde{H}$.
	This choice is possible since $t_{i+1} - t_i > \lcs_{\mathrm{singular}}(f)$.
	Next, choose a star refinement $\alpha_i$ of $\alpha'_i$, which is possible because $f_{\leq t_{i}}$ is assumed to be a paracompact Hausdorff space \cite{Engelking.1989}.
	Clearly, $\alpha_i$ is then an $\HLC$ star refinement of $\alpha_{i+1}$.
	
	To show directedness, we consider elements $(t_0,\dots,\alpha_{d})$ and $(t'_0,\dots,\alpha'_{d})$ in $\indexSet_{s,t,d}$ and construct a common lower bound $(t''_0,\dots,\alpha''_{d})$ for them as follows.
	First, we set $t''_{i} = \min \{t_{i}, t'_{i}\}$, which clearly satisfies $t''_{i + 1} - t''_{i} > \lcs_{\mathrm{singular}}(f)$ for all $i$.
	Next, we choose $\alpha''_{d} \in \Cov(f_{\leq t''_{d}})$ as an arbitrary common refinement of $\alpha_{d}$ and $\alpha'_{d}$.
	Finally, we inductively define $\alpha''_{i} \in \Cov(f_{\leq t''_{i}})$ by constructing an $\HLC$ star refinement $\beta_{i}$ of $\alpha''_{i+1}$ as above, and then choosing $\alpha''_{i}$ as a common refinement of $\beta_{i}$, $\alpha_{i}$, and $\alpha'_{i}$.
\end{proof}

We can now define an inverse system of maps indexed by the non-empty directed set $\indexSet_{s,t,d}$ by mapping $(t_0,\dots,t_{d},\alpha_0,\dots,\alpha_{d})$ to 
\[
    \begin{tikzcd}
       H_{\leq d-1}(\viet_{*}^d(\alpha_0)) \arrow[r, "H(\lambda)"] & H_{\leq d-1}(\sing_{*}(\alpha_{d})),
    \end{tikzcd}
\]
where $\lambda$ is some choice of admissible chain map associated to the covers $\alpha_i$.
The connecting maps corresponding to the relations on $\indexSet_{s,t,d}$ are given by pairs $(H(\pi),H(\eta))$, which is well-defined by \cref{lem:independence_for_admissible_maps}.

Next, we want to define the map $\psi_{s,t} \colon \CH_{\leq d-1}(f_{\leq s}) \to H_{\leq d-1}(f_{\leq t})$ as the inverse limit of the above maps $H(\lambda)$ over the set $\indexSet_{s,t,d}$. 
To do so, what remains to be shown is that the limit of the domains of the $H(\lambda)$ is $\CH_{\leq d-1}(f_{\leq s})$ and that the limit of the codomains is $H_{\leq d-1}(f_{\leq t})$.
We define 
\begin{align*}
    \mathcal{C}_{s,t,d} &= \{\alpha \in \Cov(f_{\leq s}) \mid \text{ there exists } (t_0,\dots,t_d,\alpha_0,\dots,\alpha_d) \in \indexSet_{s,t,d} \text{ with } \alpha_0 = \alpha\}, \\
    \mathcal{C}'_{s,t,d} &= \{\alpha \in \Cov(f_{\leq t}) \mid \text{ there exists } (t_0,\dots,t_d,\alpha_0,\dots,\alpha_d) \in \indexSet_{s,t,d} \text{ with } \alpha_d = \alpha\}.
\end{align*}
Both sets will be considered as posets with the refinement relation.

\begin{lem}\label{lem:coinitiality}
    Let $f \colon X \to \R$ be a function whose sublevel sets are paracompact Hausdorff spaces, and consider a dimension $d$ and indices $s < t$ satisfying $t - s > d \cdot \lcs_{\mathrm{singular}}(f)$.
    Then the subsets $\mathcal{C}_{s,t,d} \subseteq \Cov(f_{\leq s})$ and $\mathcal{C}'_{s,t,d} \subseteq \Cov(f_{\leq t})$ are coinitial.
\end{lem}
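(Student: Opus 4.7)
The plan is to adapt the inductive non-emptyness construction from the proof of \cref{lem:directedness}, ensuring that the cover at one end of the chain can be made to refine any prescribed cover. I will treat the two cases separately.

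For $\mathcal{C}'_{s,t,d}$, the claim is immediate: given an arbitrary $\alpha \in \Cov(f_{\leq t})$, I would simply run the downward inductive construction of \cref{lem:directedness} starting with $\alpha_d := \alpha$. The resulting tuple $(t_0, \dots, t_d, \alpha_0, \dots, \alpha_d)$ lies in $\indexSet_{s,t,d}$, and $\alpha_d = \alpha$ trivially refines itself, so $\alpha \in \mathcal{C}'_{s,t,d}$.

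For $\mathcal{C}_{s,t,d}$, a small modification at the last step of the construction is required. Given $\alpha \in \Cov(f_{\leq s})$, I would fix intermediate indices $t_0 = s < t_1 < \cdots < t_d = t$ with $t_{i+1} - t_i > \lcs_{\mathrm{singular}}(f)$ and an arbitrary $\alpha_d \in \Cov(f_{\leq t})$, and choose $\alpha'_i$ and $\alpha_i$ exactly as in \cref{lem:directedness} for $i = d-1, \dots, 1$. At the final step $i=0$, I would first pick $\alpha'_0$ with the $\HLC$ refinement property relative to $\alpha_1$ as in \cref{lem:directedness}, then take an ordinary star refinement $\gamma$ of $\alpha'_0$ via paracompactness of $f_{\leq s}$, and finally define $\alpha_0$ as a common refinement of $\gamma$ and $\alpha$ (for instance, by pairwise intersections of elements). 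By construction $\alpha_0$ then refines $\alpha$.

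The step I expect to be the main obstacle is verifying that $\alpha_0$ is still an $\HLC$ star refinement of $\alpha_1$ after passing from $\gamma$ to a common refinement with $\alpha$. This will follow from the general fact that passing to a refinement preserves the star refinement property: for $U \in \alpha_0$, pick $W \in \gamma$ with $U \subseteq W$; then any $U' \in \alpha_0$ meeting $U$ is contained in some $W' \in \gamma$ with $W \cap W' \neq \emptyset$, so that $\St_{\alpha_0} U \subseteq \St_{\gamma} W \subseteq V$ for some $V \in \alpha'_0$. Composing with the $\HLC$ refinement property of $\alpha'_0$ relative to $\alpha_1$ then yields $V' \in \alpha_1$ with $\St_{\alpha_0} U \subseteq V'$ and trivial induced map in $\tilde H$, which is exactly the $\HLC$ star refinement condition.
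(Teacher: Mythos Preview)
Your proof is correct and follows essentially the same approach as the paper: for $\mathcal{C}'_{s,t,d}$ both you and the paper observe that the downward construction from \cref{lem:directedness} can start at an arbitrary $\alpha_d$, and for $\mathcal{C}_{s,t,d}$ both arguments rely on the fact that any refinement of an $\HLC$ star refinement of $\alpha_1$ is again an $\HLC$ star refinement of $\alpha_1$. The paper's version is slightly more streamlined in that it simply invokes non-emptyness of $\indexSet_{s,t,d}$ to obtain some $(t_0,\dots,\alpha_d)$ and then replaces $\alpha_0$ by a common refinement of $\alpha_0$ and $\alpha$, rather than rebuilding the chain and introducing the auxiliary $\gamma$; but the content is the same.
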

\begin{proof}
	First, note that the part of the proof of \cref{lem:directedness} where non-emptyness of $\indexSet_{s,t,d}$ is shown actually establishes that $\alpha_{d}$ can be chosen arbitrarily, so that $\mathcal{C}'_{s,t,d} = \Cov(f_{\leq t})$.
	For the other assertion, let $\alpha \in \Cov(f_{\leq s})$ be an arbitrary cover.
	We choose some element $(t_0,\dots,t_{d},\alpha_0,\dots,\alpha_{d}) \in \indexSet_{s,t,d}$, which is possible because the set is non-empty.
	If $\alpha'_{0}$ is a common refinement of $\alpha$ and $\alpha_{0}$, then $\alpha'_{0}$ is clearly still an $\HLC$ star refinement of $\alpha_{1}$, so that $(t_0,\dots,t_{d},\alpha'_0,\alpha_{1}\dots,\alpha_{d})$ is an element of $\indexSet_{s,t,d}$.
	Hence, we have $\alpha'_{0} \in \mathcal{C}_{s,t,d}$ and $\alpha'_{0}$ refines $\alpha$, so $\mathcal{C}_{s,t,d} \subseteq \Cov(f_{\leq s})$ is indeed coinitial.
\end{proof}

As a consequence of \cref{lem:coinitiality}, we get that the domain of $\lim_{\indexSet_{s,t,d}} H(\lambda)$ is 
\begin{align*}
    \lim_{(t_0,\dots,t_d,\alpha_0,\dots,\alpha_d) \in \indexSet_{s,t,d}} H_{\leq d-1}(\viet_{*}^d(\alpha_0)) 
    &\cong
    \lim_{\alpha \in \mathcal{C}_{s,t,d}} H_{\leq d-1}(\viet_{*}^d(\alpha)) \\
    &\cong
    \lim_{\alpha \in \Cov(f_{\leq s})} H_{\leq d-1}(\viet_{*}(\alpha)) \\
    &\cong
    \CH_{\leq d-1}(f_{\leq s}),
\end{align*}
where we have made use of the fact that for any simplicial complex $K$, its $d$-skeleton determines its homology up to degree $d-1$, i.e., $H_{\leq d-1}(K) \cong H_{\leq d-1}(K^d)$.
Similarly, we obtain that the codomain of $\lim_{\indexSet_{s,t,d}} H(\lambda)$ is
\begin{align*}
    \lim_{(t_0,\dots,t_d,\alpha_0,\dots,\alpha_d) \in \indexSet_{s,t,d}} H_{\leq d-1}(\sing_*(\alpha_{d})) 
    &\cong
    \lim_{\alpha \in \mathcal{C}'_{s,t,d}} H_{\leq d-1}(\sing_{*}(\alpha)) \\
    &\cong
    \lim_{\alpha \in \Cov(f_{\leq t})} H_{\leq d-1}(\sing_{*}(\alpha)) \\
    &\cong
    H_{\leq d-1}(f_{\leq t}).
\end{align*}
Hence, we indeed get a well-defined map 
\[
    \psi_{s,t} = \lim_{\indexSet_{s,t,d}} H(\lambda) \colon \CH_{\leq d-1}(f_{\leq s}) \to H_{\leq d-1}(f_{\leq t})
\]
whenever $t - s > d \cdot \lcs_{\mathrm{singular}}(f)$.

We have now constructed or reviewed all maps that make up \cref{diagram:main}. Before showing commutativity of this diagram, we finish this section by showing that the construction of the maps $\psi_{s,t}$ is in some sense consistent among different choices for the indices $s$ and $t$.

\begin{prop}\label{prop:comm1}
    Let $f \colon X \to \R$ be a function whose sublevel sets are paracompact Hausdorff spaces, and consider a dimension $d$, an index $s \in \R$, and $\delta > d \cdot \lcs_{\mathrm{singular}}(f)$.
    Then the diagram 
    \begin{equation}\label{eq:psi}
        \begin{tikzcd}
                    H_{\leq d-1}(f_{\leq s+\delta}) \arrow[r] & H_{\leq d-1}(f_{\leq s+2\delta}) \\
                    \CH_{\leq d-1}(f_{\leq s}) \arrow[r] \arrow[u,"\psi_{s,s+\delta}"] & \CH_{\leq d-1}(f_{\leq s+\delta}) \arrow[u,"\psi_{s+\delta, s+2\delta}"] 
        \end{tikzcd}    
    \end{equation}
    commutes.
\end{prop}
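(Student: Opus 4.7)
The plan is to reduce commutativity of \cref{eq:psi} to \cref{lem:independence_for_admissible_maps} via a compatible choice of admissible sequences realizing the two $\psi$ maps.

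First, I would fix an arbitrary element $(u_0, \ldots, u_d, \beta_0, \ldots, \beta_d) \in \indexSet_{s+\delta, s+2\delta, d}$ together with an associated admissible chain map $\lambda' \colon \viet_*^d(\beta_0) \to \sing_*(\beta_d)$. Because the inductive construction in the proof of \cref{lem:directedness} allows the top cover of an admissible sequence to be prescribed freely, I can then produce an admissible sequence $(t_0, \ldots, t_d, \alpha_0, \ldots, \alpha_d) \in \indexSet_{s, s+\delta, d}$ with $\alpha_d = \beta_0$, together with an associated admissible chain map $\lambda \colon \viet_*^d(\alpha_0) \to \sing_*(\alpha_d)$. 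Setting $A_i = f_{\leq t_i}$ and $A'_i = f_{\leq u_i}$, we have $A_i \subseteq A'_i$ since $t_i \leq s+\delta \leq u_i$, and iterating the star refinement relations shows that $\alpha_i$ refines $\alpha_d = \beta_0$, which in turn refines $\beta_i$, so $\alpha_i$ refines $\beta_i$ for every $i$.

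With this setup, \cref{lem:independence_for_admissible_maps} applies and yields the identity $H(\eta) \circ H(\lambda) = H(\lambda') \circ H(\pi)$ as maps $H_{\leq d-1}(\viet_*^d(\alpha_0)) \to H_{\leq d-1}(\sing_*(\beta_d))$. Post-composing with the canonical map $H_{\leq d-1}(\sing_*(\beta_d)) \to H_{\leq d-1}(f_{\leq s+2\delta})$, the left-hand side is the $\alpha_0$-component of the up-then-right composition in \cref{eq:psi}, while the right-hand side is the $\alpha_0$-component of the right-then-up composition, by the inverse-limit descriptions of $\psi_{s, s+\delta}$ and $\psi_{s+\delta, s+2\delta}$. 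Since $\alpha_0$ ranges over $\mathcal{C}_{s, s+\delta, d}$, which is coinitial in $\Cov(f_{\leq s})$ by \cref{lem:coinitiality}, agreement on all such components implies equality of the two maps $\CH_{\leq d-1}(f_{\leq s}) \to H_{\leq d-1}(f_{\leq s+2\delta})$.

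The only delicate point is ensuring that $\alpha_d$ may be prescribed as $\beta_0$ in the construction of an admissible sequence; but this is immediate from the non-emptyness step of \cref{lem:directedness}, where the top cover of the admissible sequence is an arbitrary initial choice. Once that point is settled, the rest of the argument is a routine unpacking of the limit definitions.
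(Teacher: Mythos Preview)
Your approach is correct and essentially the same as the paper's: both reduce the square to \cref{lem:independence_for_admissible_maps} by associating to an admissible sequence in $\indexSet_{s+\delta,s+2\delta,d}$ a compatible one in $\indexSet_{s,s+\delta,d}$, the paper packaging this as a map $\tau$ between the index sets and then taking the limit of the resulting morphism of inverse systems. Your choice $\alpha_d=\beta_0$ (so that the refinements $\alpha_i$ of $\beta_i$ follow automatically from the chain of star refinements) is a mild simplification of the paper's iterated common-refinement construction; the closing appeal to coinitiality is superfluous, since a single compatible pair already suffices---both composites factor through the one projection $\CH_{\leq d-1}(f_{\leq s})\to H_{\leq d-1}(\viet_*^d(\alpha_0))$, and \cref{lem:independence_for_admissible_maps} gives equality there directly.
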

\begin{proof}
	We write $s' = s + \delta$ and $s'' = s + 2 \delta$.
	Now, define a map $\tau \colon \indexSet_{s',s'',d} \to \indexSet_{s,s',d}$ as follows.
	Starting with an element $(t'_{0},\dots,t'_{d},\alpha'_{0},\dots,\alpha'_{d}) \in \indexSet_{s',s'',d}$, we set $t_{i} = t'_{i} - \delta$.
	We also choose $\alpha_{d} \in \Cov(f_{\leq t_{d}})$ as an arbitrary refinement of $\alpha'_{d}$.
	Then, similar to the construction of common refinements in the proof of \cref{lem:directedness}, we inductively choose $\beta_{i} \in \Cov(f_{\leq t_{i}})$ as an $\HLC$ star refinement of $\alpha_{i+1} \in \Cov(f_{\leq t_{i+1}})$, and then choose $\alpha_{i}$ as a common refinement of $\beta_{i}$ and $\alpha'_{i}$. 
	This process is possible because $t_{i + 1} - t_{i} = t'_{i + 1} - t'_{i} > \lcs_{\mathrm{singular}}(f)$.
	By construction, $(t_{0},\dots,\alpha_{d})$ is an element of $\indexSet_{s,s',d}$, so we may set $\tau(t'_{0},\dots,\alpha'_{d}) = (\tau(t_{0}),\dots,\tau(\alpha_{d})) = (t_{0},\dots,\alpha_{d})$.
	
	Since we have $f_{\tau(t'_{i})} \subseteq f_{t'_{i}}$ and $\tau(\alpha'_{i})$ refines $\alpha'_{i}$ for all $i$, we can apply \cref{lem:independence_for_admissible_maps} to obtain a commutative diagram
	\[
		\begin{tikzcd}
            		H_{\leq d-1}(\sing_*(\tau(\alpha'_{d}))) \arrow[r, "H(\eta)"] & H_{\leq d-1}(\sing_{*}(\alpha'_{d})) \\
			H_{\leq d-1}(\viet_{*}^d(\tau(\alpha'_0))) \arrow[r, "H(\pi)"] \arrow[u, "H(\lambda)"] & H_{\leq d-1}(\viet_{*}^d(\alpha'_0)) \arrow[u, "H(\lambda')"] 
        		\end{tikzcd}
	\]
	for any element $(t'_{0},\dots,t'_{d},\alpha'_{0},\dots,\alpha'_{d}) \in \indexSet_{s',s'',d}$, where $\lambda$ and $\lambda'$ are choices of admissible chain maps.
	In other words, we obtain a morphism between the inverse systems of admissible maps $H(\lambda)$ and $H(\lambda')$, which by a standard procedure gives rise to a map between their limits.
	That is, we obtain the claimed commutative \cref{eq:psi}.
	That the limits of the maps $H(\pi)$ and $H(\eta)$ yield the inclusion-induced maps is an immediate consequence of their definitions.
\end{proof}

\section{A homotopy on the singular chain complex}
\label{section:homotopy_singular}
The goal of this section is to prove that the triangle in \cref{diagram:main} involving the inclusion-induced map in singular homology commutes.

\begin{prop}\label{prop:comm2}
    Let $f \colon X \to \R$ be a function whose sublevel sets are paracompact Hausdorff spaces and consider a dimension d, as well as $\delta > d \cdot \lcs_{\text{singular}}(f)$. 
    Then for all $s \in \R$ the diagram
    \begin{equation}\label{eq:comm1}
        \begin{tikzcd}
                    H_{\leq d-1}(f_{\leq s}) \arrow[r] \arrow[d, "\varphi_s", swap] & H_{\leq d-1}(f_{\leq s+\delta}) \\
                    \CH_{\leq d-1}(f_{\leq s})  \arrow[ru,"\psi_{s,s+\delta}", swap] &
        \end{tikzcd}
    \end{equation}
    commutes.
\end{prop}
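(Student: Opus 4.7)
The plan is to construct, for a suitable choice of admissible covers, an explicit chain homotopy between $\lambda \circ \mu$ and the inclusion at the level of singular chain complexes, mimicking the inductive star-refinement arguments from \cref{lem:cech_to_singular,lem:independence_for_admissible_maps}, and then to verify that commutativity of the triangle follows by passing to the inverse limit defining $\psi_{s, s+\delta}$. Concretely, I would fix an element $(t_0, \dots, t_d, \alpha_0, \dots, \alpha_d) \in \indexSet_{s, s+\delta, d}$, which exists by \cref{lem:directedness}, together with an admissible chain map $\lambda \colon \viet^d_*(\alpha_0) \to \sing_*(\alpha_d)$ supplied by \cref{lem:cech_to_singular}. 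The inclusion $\iota \colon \sing_*(\alpha_0) \to \sing_*(\alpha_d)$ is well-defined because each $U \in \alpha_0$ is contained in some set of $\alpha_d$ by iterated refinement, and $\lambda \circ \mu$ makes sense on chains of dimension $\leq d$ since $\mu$ lands in $\viet^d_*(\alpha_0)$ in those dimensions.

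Next I would inductively construct maps $D \colon \sing_n(\alpha_0) \to \sing_{n+1}(\alpha_d)$ for $0 \leq n \leq d-1$ satisfying $\partial D + D \partial = \lambda \circ \mu - \iota$, while maintaining the invariant that, for a singular $n$-simplex $\sigma$ with image in some $U_\sigma \in \alpha_0$, the chain $D(\sigma)$ is supported in some $W_\sigma \in \alpha_{n+1}$ containing the image of $\sigma$. The base case sets $D = 0$ on $0$-simplices, where admissibility of $\lambda$ gives $\lambda(\mu(\sigma)) = \sigma$ directly. For the inductive step, a routine computation using the chain map property of $\lambda \circ \mu$ and the induction hypothesis shows that $c = \lambda(\mu(\sigma)) - \sigma - D(\partial \sigma)$ is an $n$-cycle. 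To control its support, $\sigma$ has support in some $U' \in \alpha_n$ containing $U_\sigma$ (by iterated refinement), $\lambda(\mu(\sigma))$ has support in some $V \in \alpha_n$ containing the vertices of $\mu(\sigma)$ (by admissibility of $\lambda$), and for each face $\sigma_i$ of $\sigma$ the chain $D(\sigma_i)$ has support in some $W_i \in \alpha_n$ containing the image of $\sigma_i$ (by the support invariant). Since each of $V$ and $W_i$ shares a vertex of $\sigma$ with $U'$, all three are contained in $\St_{\alpha_n}(U')$. The HLC star refinement property of $\alpha_n$ in $\alpha_{n+1}$ then yields $W \in \alpha_{n+1}$ with $\St_{\alpha_n}(U') \subseteq W$ and trivial inclusion in reduced singular homology, so $c$ bounds some $(n+1)$-chain $c' \in \sing_*(W)$, which I take to be $D(\sigma)$.

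Having constructed $D$ up to dimension $d-1$, we obtain $H(\lambda \circ \mu) = H(\iota)$ as maps $H_{\leq d-1}(\sing_*(\alpha_0)) \to H_{\leq d-1}(\sing_*(\alpha_d))$. Under the canonical isomorphisms with $H_{\leq d-1}(f_{\leq s})$ and $H_{\leq d-1}(f_{\leq s+\delta})$, the map $H(\iota)$ corresponds to the inclusion-induced arrow in \cref{eq:comm1}, while $H(\lambda \circ \mu)$ is the projection of $\psi_{s, s+\delta} \circ \varphi_s$ onto the factor of the inverse system indexed by $(t_0, \dots, \alpha_d)$, since $\varphi_s$ projects to $H(\mu)$ in the limit description of \v{C}ech homology and $\psi_{s, s+\delta}$ is defined as $\lim H(\lambda)$. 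As the projection on the codomain is an isomorphism, this equality at the cover level suffices to conclude commutativity of the full triangle. The main technical obstacle is the inductive construction of $D$ itself: bookkeeping the support data so that each of the three chains contributing to $c$ genuinely lies in a common star $\St_{\alpha_n}(U')$ requires combining the admissibility of $\lambda$ with the propagated support invariant in a manner that parallels, but is not immediately implied by, the homotopy construction in \cref{lem:independence_for_admissible_maps}.
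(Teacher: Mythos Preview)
Your proposal is correct and matches the paper's approach: the paper separates out the chain-level homotopy construction as \cref{lem:comm2} (treating the case $\mu(\sigma)=0$ explicitly, where your appeal to admissibility of $\lambda$ does not literally apply but the argument still goes through since $\lambda(\mu(\sigma))=0$ has empty support), and then passes to the inverse limit over $\indexSet_{s,s+\delta,d}$. Your final step, checking the triangle at a single index and invoking that the projection on the codomain is an isomorphism, is equivalent to the paper's limit-of-commutative-triangles formulation.
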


We start with a lemma. 
It is analogous to \cite[Lemma 8]{Mardesic.1958}.

\begin{lem}\label{lem:comm2}
    Let $A_0 \subseteq A_1 \subseteq \dots \subseteq A_d \subseteq X$ be topological spaces with an admissible sequence of covers $\alpha_{i}$ and an admissible chain map $\lambda$.
    Then the diagram
    \begin{equation}\label{eq:comm2'}
        \begin{tikzcd}
                    H_{\leq d-1}(\sing_{*}(\alpha_{0})) \arrow[r, "H(\eta)"] \arrow[d, "H(\mu)",swap] & H_{\leq d-1}(\sing_{*}(\alpha_{d})) \\
                    H_{\leq d-1}(\viet^{d}_{*}(\alpha_{0})) \arrow[ru,"H(\lambda)", swap] &
        \end{tikzcd}
    \end{equation}
    commutes.
\end{lem}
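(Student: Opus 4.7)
The plan is to exhibit a chain homotopy $D$ of degree $+1$, carrying $\sing_{n}(\alpha_{0})$ into $\sing_{n+1}(\alpha_{n+1}) \subseteq \sing_{n+1}(\alpha_{d})$ for each $n \leq d - 1$, and satisfying $\partial \circ D + D \circ \partial = \eta - \lambda \circ \mu$ on chains of degree at most $d - 1$. Such a homotopy forces $\eta$ and $\lambda \circ \mu$ to agree on $(d-1)$-cycles modulo $(d-1)$-boundaries, hence on $H_{\leq d-1}$, which is the claimed commutativity of \eqref{eq:comm2'}.

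I would construct $D$ inductively on the dimension of singular simplices, paralleling the inductions in the proofs of \cref{lem:cech_to_singular,lem:independence_for_admissible_maps}. As part of the induction hypothesis, I maintain an admissibility-style support condition: for each singular $n$-simplex $\sigma \in \sing_{n}(\alpha_{0})$, the chain $D(\sigma)$ is supported in some $V \in \alpha_{n+1}$ that also contains the image of $\sigma$.

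For the base case $n = 0$, any singular $0$-simplex $\sigma$ with $\sigma(e_{0}) = v$ is the unique singular $0$-simplex at $v$, which by admissibility of $\lambda$ equals $\lambda(\mu(\sigma)) = \lambda(\{v\})$; so I set $D(\sigma) = 0$. For $n = 1$, suppose $\sigma$ is supported in $U \in \alpha_{0}$. Admissibility of $\lambda$ yields $V_{\sigma} \in \alpha_{1}$ containing both the support of $\lambda(\mu(\sigma))$ and the two vertices of $\sigma$, and since $\alpha_{0}$ refines $\alpha_{1}$ I choose $U' \in \alpha_{1}$ with $U \subseteq U'$. Then $U' \cap V_{\sigma} \neq \emptyset$ yields $U' \cup V_{\sigma} \subseteq \St_{\alpha_{1}} V_{\sigma}$, and the fact that $\alpha_{1}$ is an $\HLC$ star refinement of $\alpha_{2}$ produces $V \in \alpha_{2}$ containing this star in which the $1$-cycle $\eta(\sigma) - \lambda(\mu(\sigma))$ bounds; that bounding chain is $D(\sigma)$.

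For $2 \leq n \leq d - 1$, let $\sigma \in \sing_{n}(\alpha_{0})$ have faces $\sigma_{i}$. Just as in the proof of \cref{lem:independence_for_admissible_maps}, I pick $U \in \alpha_{n}$ containing the image of $\sigma$ (using that $\alpha_{0}$ refines $\alpha_{n}$), $V_{\sigma} \in \alpha_{n}$ from admissibility of $\lambda$, and $U_{i} \in \alpha_{n}$ from the inductive invariant for each $D(\sigma_{i})$. Because $n \geq 2$, any two of these sets share some vertex of $\sigma$, so they all lie in $\St_{\alpha_{n}} U_{0}$. A direct computation using that $\eta$, $\lambda$, $\mu$ are chain maps together with the inductive homotopy identity shows that $\eta(\sigma) - \lambda(\mu(\sigma)) - D(\partial \sigma)$ is an $n$-cycle supported in this star, and the fact that $\alpha_{n}$ is an $\HLC$ star refinement of $\alpha_{n+1}$ produces the required bounding chain $D(\sigma) \in \sing_{n+1}(\alpha_{n+1})$ satisfying the invariant. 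The only real obstacle is the support bookkeeping that keeps the star construction available at each step; no new technique beyond the ones already used in the two previous lemmas is required.
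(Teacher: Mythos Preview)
Your approach is essentially the same as the paper's: an inductive construction of a chain homotopy $D$ with the admissibility-style support invariant, using the $\HLC$ star refinement at each step to bound the relevant cycle. The structure, the induction hypothesis, and the star-containment bookkeeping all match.

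There is one point you have glossed over that the paper treats explicitly: the case $\mu(\sigma) = 0$, i.e.\ when the singular simplex $\sigma$ has a repeated vertex. Your sentence ``Admissibility of $\lambda$ yields $V_{\sigma} \in \alpha_{n}$ containing both the support of $\lambda(\mu(\sigma))$ and the vertices of $\sigma$'' invokes the admissibility clause of \cref{defi:admissibility}, but that clause is stated only for oriented simplices $\rho$, not for the zero chain. When $\mu(\sigma) = 0$ there is no simplex to which it applies, so as written the choice of $V_{\sigma}$ is unjustified. The fix is easy---in that case $\lambda(\mu(\sigma)) = 0$ contributes nothing to the support, and the cycle $\eta(\sigma) - D(\partial\sigma)$ is already contained in $U \cup \bigcup_{i} U_{i}$, which lies in a single star by the same vertex-sharing argument---but the case does need to be addressed, and the paper does so by an explicit case distinction at each inductive step. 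Once you insert that, your proof is complete and coincides with the paper's.
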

\begin{proof}
	We follow the general outline of the proof of \cref{lem:independence_for_admissible_maps} and construct a chain homotopy $D \colon \sing_{*}(\alpha_{0}) \to \sing_{*}(\alpha_{d})$ for the above diagram, i.e., a map such that 
	$D \circ \partial + \partial \circ D = \lambda \circ \mu - \eta$.
	Again, it suffices to define $D$ for simplices up to dimension $d-1$, and to do so we proceed inductively on the dimension.
	As part of the induction hypothesis, we will again assume that for any $n$-simplex $\sigma$ of $\sing_{*}(\alpha_{0})$ with $n \leq d-1$ there exists $U \in \alpha'_{n+1}$ such that the singular $(n+1)$-chain $D(\sigma)$ and the singular simplex $\sigma$ are supported in $U$.
		
	On 0-simplices, we set $D$ to be 0.
	For a 1-simplex $\sigma$, we distinguish two cases: either $\mu(\sigma) \neq 0$ or $\mu(\sigma) = 0$.
	If $\mu(\sigma)$ is not 0, it is an oriented 1-simplex in $\viet_{*}(\alpha_{0})$.
	Because $\lambda$ is admissible, we may hence choose $U \in \alpha_{1}$ such that the support of $\lambda(\mu(\sigma))$ and the vertices of $\mu(\sigma)$ are contained in $U$.
	We have $\sigma \in \sing_{*}(\alpha_{0})$ and $\alpha_{0}$ refines $\alpha_{1}$, so we may also choose $U' \in \alpha_{1}$ such that $U'$ contains the support of $\sigma$. 
	Note that the support of $\sigma$ contains the vertices of $\mu(\sigma)$, so we have $U \cap U' \neq \emptyset$.
	Hence, the 1-cycle $\lambda(\mu(\sigma)) - \eta(\sigma)$ is supported in $\St_{\alpha_{1}} U$.
	Since the covers $\alpha_{i}$ are admissible, we may now choose $V \in \alpha_{2}$ such that $\St_{\alpha_{1}} U \subseteq V$ and the inclusion map is trivial in singular homology.
	It follows that there exists a 2-chain $c' \in \sing_{*}(V) \subseteq \sing_{*}(\alpha_{2})$ with $\partial(c') = c$.
	We set $D(\sigma) = c'$.
	
	If $\mu(\sigma) = 0$, we choose $U \in \alpha_{0}$ such that the image of $\sigma$ is contained in $U$.
	Since $\alpha_{0}$ is an $\HLC$ star refinement of $\alpha_{2}$, we may pick $V \in \alpha_{2}$ such that $U \subseteq V$ and the inclusion map is trivial in singular homology.
	Note that $\mu(\sigma) = 0$ implies that the two vertices of $\sigma$ are mapped to the same point in $U$. 
	In particular, this implies that $c = \sigma$ is a 1-cycle in $\sing_{*}(U)$, so there exists a 2-chain $c' \in \sing_{*}(V) \subseteq \sing_{*}(\alpha_{2})$ with $\partial(c') = c$.
	We set $D(\sigma) = c'$.
	
	Now, assume that $D$ has been defined as required for simplices up to dimension $n-1$ and let $\sigma$ be an $n$-simplex in $\sing_{*}(\alpha_{0})$, where $2 \leq n \leq d-1$.
	Again, we distinguish between the cases where either $\mu(\sigma) \neq 0$ or $\mu(\sigma) = 0$.
	If $\mu(\sigma)$ is not 0, it is an oriented $n$-simplex of $\viet_{*}(\alpha_{0})$.
	Using admissibility of $\lambda$, we may consequently choose $U \in \alpha_{n}$ such that the support of $\lambda(\mu(\sigma))$ and the vertices of $\mu(\sigma)$ are contained in $U$.
	We may also choose $U' \in \alpha_{n}$ such that the support of $\sigma$ is contained in $U'$. 
	With the induction hypothesis on $D$, it is also possible to choose $U_{i} \in \alpha_{n}$ for all boundary simplices $\sigma_{i}$ of $\sigma$ such that $U_{i}$ contains the supports of $D(\sigma_{i})$ and $\sigma_{i}$.
	Writing $v_{0},\dots,v_{n}$ for the vertices of $\mu(\sigma)$, we obtain $v_{n} \in U \cap U' \cap U_{0}$ and $v_{j} \in U_{i} \cap U_{0}$ for $j \notin \{0,i\}$.
	Hence, the singular $n$-cycle $c = \lambda(\mu(\sigma)) - \eta(\sigma) - D(\partial(\sigma))$ is supported in $\St_{\alpha_{n}}(U_{0})$.
	Using admissibility of the $\alpha_{i}$, we can now choose $V \in \alpha_{n+1}$ such that $\St_{\alpha_{n}}(U_{0}) \subseteq V$ and the inclusion map is homologically trivial.
	Thus, there exists $c' \in \sing_{*}(V) \subseteq \sing_{*}(\alpha_{n+1})$ such that $\partial(c') = c$ and we set $D(\sigma) = c'$.
	
	If $\mu(\sigma) = 0$, we start by choosing $U \in \alpha_{n}$ such that contains the support of $\sigma$, which is possible because $\sigma \in \sing_{*}(\alpha_{0})$ and $\alpha_{0}$ refines $\alpha_{n}$.
	For every boundary simplex $\sigma_{i}$ of $\sigma$ we use the induction hypothesis to choose $U_{i} \in \alpha_{n}$ such that $U_{i}$ contains the supports of $\sigma_{i}$ and $D(\sigma_{i})$.
	A routine argument again implies that $U \cap U_{0} \neq \emptyset$ and $U_{i} \cap U_{0} \neq \emptyset$ for all $i$, so the $n$-cycle $c = \lambda(\mu(\sigma)) - \eta(\sigma) - D(\partial(\sigma)) = - \eta(\sigma) - D(\partial(\sigma))$ is supported in $\St_{\alpha_{n}}(U_{0})$.
	Since the $\alpha_{i}$ are admissible we may choose $V \in \alpha_{n+1}$ such that $\St_{\alpha_{n}}(U_{0})$ includes homologically trivially into $V$.
	This implies that there is an $(n+1)$-chain $c' \in \sing_{*}(V) \subseteq \sing_{*}(\alpha_{n+1}$ such that $\partial(c') = c$.
	Setting $D(\sigma) = c'$ finishes the construction. 
	We omit the straightforward verification that $D$ indeed has all required properties.
\end{proof}

\begin{proof}[Proof of \cref{prop:comm2}]
	Using \cref{lem:independence_for_admissible_maps}, we may consider an inverse system indexed by $\indexSet_{s,s+\delta,d}$, mapping $(t_{0},\dots,\alpha_{d})$ to the \cref{eq:comm2'},
	which commutes by \cref{lem:comm2}.
	The connecting maps in this inverse system for relations in $\indexSet_{s,s+\delta,d}$ are given by $(H(\eta), H(\eta), H(\pi))$.
	Taking the inverse limit of this system yields the commutative \cref{eq:comm1} as claimed.
\end{proof}

\section{Homotopies on \v{C}ech complexes}
\label{section:homotopy_cech}
The goal of this section is to prove that the triangle in \cref{diagram:main} involving the inclusion-induced map in \v{C}ech homology commutes.

\begin{prop}\label{prop:comm3}
    Let $f \colon X \to \R$ be a function whose sublevel sets are paracompact Hausdorff spaces and consider a dimension d, as well as $\delta > d \cdot \lcs_{\text{singular}}(f)$. 
    Then for all $s \in \R$ the diagram
    \begin{equation}\label{eq:comm3}
        \begin{tikzcd}
                    & H_{\leq d-1}(f_{\leq s+\delta}) \arrow[d, "\varphi_{s + \delta}"]  \\
                    \CH_{\leq d-1}(f_{\leq s}) \arrow[r] \arrow[ru,"\psi_{s,s+\delta}"] & \CH_{\leq d-1}(f_{\leq s+\delta})
        \end{tikzcd}
    \end{equation}
    commutes.
\end{prop}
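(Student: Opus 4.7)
The plan is to mirror the structure of \cref{section:homotopy_singular}: first prove a chain-level analog of \cref{lem:comm2} exhibiting a homotopy between $\mu \circ \lambda$ and the refinement-induced chain map $\pi$, and then pass to the inverse limit over $\indexSet_{s,s+\delta,d}$. Concretely, given an admissible sequence of covers $\alpha_{0}, \dots, \alpha_{d}$ together with an associated admissible chain map $\lambda$, the goal is to show that the triangle
\[
    \begin{tikzcd}
        & H_{\leq d-1}(\sing_{*}(\alpha_{d})) \arrow[dr, "H(\mu)"] & \\
        H_{\leq d-1}(\viet^{d}_{*}(\alpha_{0})) \arrow[ur, "H(\lambda)"] \arrow[rr, "H(\pi)"'] & & H_{\leq d-1}(\viet_{*}(\alpha_{d}))
    \end{tikzcd}
\]
commutes.

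I would prove this by constructing a chain homotopy $D \colon \viet^{d-1}_{*}(\alpha_{0}) \to \viet_{*}(\alpha_{d})$ satisfying $\partial D + D \partial = \mu \lambda - \pi$, by induction on the dimension of oriented simplices, carrying the hypothesis that for every oriented $n$-simplex $\rho$ all vertices of $\rho$ and all vertices appearing in $D(\rho)$ lie in some common cover element of $\alpha_{n+1}$. On $0$-simplices one sets $D(\{v\}) = 0$, which is consistent since $\mu(\lambda(\{v\})) = \{v\} = \pi(\{v\})$. For an oriented $n$-simplex $\rho = [v_{0}, \dots, v_{n}]$ with $1 \leq n \leq d-1$, the cycle $c = \mu(\lambda(\rho)) - \pi(\rho) - D(\partial \rho)$ concentrates on a single cover element of $\alpha_{n+1}$: admissibility of $\lambda$ gives $V \in \alpha_{n}$ containing the vertices of $\rho$ and the vertices of $\mu(\lambda(\rho))$; the inductive hypothesis gives $V_{i} \in \alpha_{n}$ containing the vertices of $\rho_{i}$ and those of $D(\rho_{i})$; and the standard pairwise-intersection argument already used in \cref{lem:cech_to_singular,lem:independence_for_admissible_maps,lem:comm2} (with the $n = 1$ case again trivial since $D(\partial \rho) = 0$) shows $V$ and all $V_{i}$ are contained in $\St_{\alpha_{n}} V_{0}$, which star refinement places inside some $W \in \alpha_{n+1}$. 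The crucial simplification compared with \cref{lem:comm2} is that $\viet_{*}(W)$, as the full simplex on the point set of $W$, is automatically acyclic in positive degrees; no $\HLC$ hypothesis is needed for this homotopy, only the ordinary star-refinement part of admissibility. Hence $c$ bounds some $c' \in \viet_{*}(W) \subseteq \viet_{*}(\alpha_{n+1}) \subseteq \viet_{*}(\alpha_{d})$, and setting $D(\rho) = c'$ completes the induction.

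The deduction of \cref{prop:comm3} from the chain-level lemma then proceeds exactly as in the proof of \cref{prop:comm2}: the triangle above becomes an inverse system of commutative triangles indexed by $\indexSet_{s,s+\delta,d}$, with connecting maps $(H(\pi), H(\eta), H(\pi))$ that are well-defined in homology by \cref{lem:independence_for_admissible_maps}, and one takes the inverse limit. By \cref{lem:coinitiality}, the limit of $H_{\leq d-1}(\viet^{d}_{*}(\alpha_{0}))$ is $\CH_{\leq d-1}(f_{\leq s})$ and the limit of $H_{\leq d-1}(\viet_{*}(\alpha_{d}))$ is $\CH_{\leq d-1}(f_{\leq s+\delta})$; the limit of the $H(\pi)$'s is the inclusion-induced map on \v{C}ech homology, while $H(\mu) \circ H(\lambda)$ limits to $\varphi_{s+\delta} \circ \psi_{s,s+\delta}$. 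I expect the main technical obstacle to be the careful bookkeeping of the cover-element support condition through the induction, in particular cleanly separating the $n = 1$ step (where $D(\partial \rho) = 0$) from the $n \geq 2$ step that uses the full star-intersection argument; but this is essentially the same bookkeeping as in \cref{lem:comm2}, with the unconditional acyclicity of $\viet_{*}(W)$ on a single cover element replacing the $\HLC$-based filling step.
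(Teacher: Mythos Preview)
Your proposal is correct and follows the paper's overall architecture exactly: a chain-level lemma showing $\mu\circ\lambda$ and $\pi$ are chain homotopic, followed by the same inverse-limit passage over $\indexSet_{s,s+\delta,d}$ with connecting maps $(H(\pi),H(\eta),H(\pi))$.

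The only difference is in how the chain homotopy $D$ is built. You mirror the inductive filling of \cref{lem:comm2}, carrying a support condition in $\alpha_{n+1}$ and invoking the star refinement so that the cycle $c=\mu\lambda(\rho)-\pi(\rho)-D(\partial\rho)$ lands in a single $W\in\alpha_{n+1}$, where acyclicity of $\viet_*(W)$ lets you fill. The paper instead exploits that admissibility of $\lambda$ \emph{already} places both $\mu(\lambda(\rho))$ and $\pi(\rho)$ inside a single $U\in\alpha_n$, and writes down $D(\rho)$ in one stroke as a cone: it defines a \emph{join} $x\vee c$ for $x\in U$, proves the identity $\partial(x\vee c)+x\vee\partial c=c$, and sets $D(\rho)=x\vee(\mu\lambda(\rho)-\pi(\rho))$. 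This bypasses both the induction and the star-refinement step. Your route buys a uniform template (literally reusing the bookkeeping of \cref{lem:comm2}); the paper's route buys brevity by recognising that in the Vietoris target the filling is an explicit cone rather than an abstract bounding argument.
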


We start with some terminology and intermediate results.

\begin{defi}
Let $X$ be a topological space and $\alpha \in \Cov(X)$.
If $\rho = [v_{0},\dots,v_{n}] \in \viet_{*}(\alpha)$ is an oriented Vietoris $n$-simplex with $U \in \alpha$ such that $v_{i} \in U$ for all $i$, and $x \in U$ is any point, we define the \emph{join} $x \vee \rho$ as the oriented Vietoris $(n+1)$-simplex $[x,v_{0},\dots,v_{n}]$. 
By convention, we set $x \vee \rho = 0$ if there is some $j$ with $x = v_{j}$.
If $\sum_{i} a_{i} \rho_{i} \in \viet_{*}(\alpha)$ is a Vietoris $n$-chain with simplices $\rho_{i}$, coefficients $a_{i}$, sets $U_{i} \in \alpha$ such that the vertices of $\rho_{i}$ are in $U_{i}$, and $x \in \bigcap_{i} U_{i}$ is any point, we define the \emph{join} $x \vee \sum_{i} a_{i} \rho_{i}$ as the $(n+1)$-chain $\sum_{i} a_{i} (x \vee \rho_{i})$.
\end{defi}

\begin{lem}\label{lem:join}
	Let $X$ be a topological space and $\alpha \in \Cov(X)$. Let $c = \sum_{i} a_{i} \rho_{i} \in \viet_{*}(\alpha)$ be an $n$-chain with $n \geq 1$ and choose sets $U_{i} \in \alpha$ such that the vertices of $\rho_{i}$ are contained in $U_{i}$. 
	Let $x \in \bigcap_{i} U_{i}$ be any point.
	Then 
	$
		\partial(x \vee c) + x \vee \partial(c) = c.
	$
\end{lem}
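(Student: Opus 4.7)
The plan is to reduce the identity to the single-simplex case and then verify it by direct computation. Since both sides of the asserted equation are linear in $c$, and the construction of $x \vee c$ is itself defined linearly simplex by simplex, it suffices to prove $\partial(x \vee \rho) + x \vee \partial(\rho) = \rho$ for a single oriented $n$-simplex $\rho = [v_{0},\dots,v_{n}]$ whose vertices all lie in some $U \in \alpha$ containing $x$. I would then split the verification into two cases according to whether $x$ is distinct from all vertices of $\rho$ or coincides with one of them.

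In the non-degenerate case $x \notin \{v_{0},\dots,v_{n}\}$, the join $x \vee \rho = [x,v_{0},\dots,v_{n}]$ is itself an oriented $(n+1)$-simplex of $\viet_{*}(\alpha)$, all of whose faces have pairwise distinct vertices. Applying the standard simplicial boundary formula, the face obtained by omitting $x$ yields $\rho$, while the remaining $n+1$ faces are exactly $-x \vee \partial(\rho)$ by the definition of the join. A routine rearrangement gives the claimed identity.

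In the degenerate case $x = v_{k}$ for a unique index $k$, the convention sets $x \vee \rho = 0$, so one must show $x \vee \partial(\rho) = \rho$. Writing out $\partial(\rho)$, every summand $x \vee [v_{0},\dots,\hat{v_{j}},\dots,v_{n}]$ with $j \neq k$ vanishes because $x = v_{k}$ still appears among the vertices of that face; only the $j=k$ summand survives and contributes $(-1)^{k}[v_{k},v_{0},\dots,\hat{v_{k}},\dots,v_{n}]$. Moving $v_{k}$ back into its original slot requires $k$ transpositions and thus produces an additional sign of $(-1)^{k}$, so the two signs cancel and the expression collapses to $\rho$.

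I do not foresee any real obstacle: this is the familiar cone-contraction computation on the ordered simplicial chain complex of a single simplex, and the only subtlety is checking that the degenerate convention $x \vee \rho = 0$ interacts correctly with the boundary operator. The sign bookkeeping in the degenerate case is the only calculation that needs any care, and it is completely standard.
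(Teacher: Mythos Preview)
Your proof is correct and follows essentially the same approach as the paper: a direct computation with the simplicial boundary formula after reducing to a single simplex by linearity. The only organizational difference is that the paper carries out one unified calculation relying implicitly on the convention that an oriented simplex symbol with a repeated vertex is zero, whereas you split off and verify the degenerate case $x = v_k$ explicitly.
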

\begin{proof}
	Writing $\rho_{i} = [v_{i,0},\dots,v_{i,n}]$ for the simplices of $c$, 
	we can write $\partial(\rho_{i}) = \sum_{k = 0}^{n} (-1)^{k} [v_{i,0},\dots,\widehat{v_{i,k}},\dots,v_{i,n}]$ because $n \geq 1$, where $\widehat{v_{i,k}}$ means that $v_{i,k}$ is excluded from the given list.
	Denoting $c' = \partial(x \vee c) + x \vee \partial(c)$, we calculate
	\begin{align*}
		c' 
								      &= \sum_{i} a_{i} \left( \partial([x,v_{i,0},\dots,v_{i,n}]) + \left(x \vee \sum_{k = 0}^{n} (-1)^{k} [v_{i,0},\dots,\widehat{v_{i,k}},\dots,v_{i,n}] \right) \right) \\
								      &= \sum_{i} a_{i} \left( [v_{i,0},\dots,v_{i,n}] + \sum_{k = 0}^{n} \left( \left( (-1)^{k+1} + (-1)^{k} \right) [x,v_{i,0},\dots,\widehat{v_{i,k}},\dots,v_{i,n}] \right) \right) \\ 
								      &= c.					    
	\qedhere
	\end{align*}
\end{proof}

The following lemma is analogous to \cite[Lemma 9]{Mardesic.1958}.

\begin{lem}\label{lem:comm3}
    Let $A_0 \subseteq A_1 \subseteq \dots \subseteq A_d \subseteq X$ be topological spaces with an admissible sequence of covers $\alpha_{i}$ and an admissible chain map $\lambda$.
    Then the diagram
    \begin{equation}\label{eq:comm3'}
        \begin{tikzcd}
                    & H_{\leq d-1}(\sing_{*}(\alpha_{d})) \arrow[d, "H(\mu)"]  \\
                    H_{\leq d-1}(\viet^{d}_{*}(\alpha_{0})) \arrow[r, "H(\pi)", swap] \arrow[ru,"H(\lambda)"] & H_{\leq d-1}(\viet^{d}_{*}(\alpha_{d}))
        \end{tikzcd}
    \end{equation}
    commutes.
\end{lem}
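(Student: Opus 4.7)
The plan is to construct a chain homotopy
\[
D \colon \viet^{d-1}_*(\alpha_0) \to \viet_*(\alpha_d)
\]
satisfying $\partial \circ D + D \circ \partial = \mu \circ \lambda - \pi$, which by the usual argument suffices to establish the commutativity of \cref{eq:comm3'} in homology up to degree $d-1$. The construction proceeds inductively on the dimension of oriented Vietoris simplices, in close analogy with \cref{lem:independence_for_admissible_maps,lem:comm2}. The new ingredient, compared to those earlier homotopy constructions, is that the required bounding chains will be produced explicitly via the join construction of \cref{lem:join}, so that only the star refinement part of admissibility, and not the $\HLC$ null-homology condition, enters the argument.

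On $0$-simplices, set $D(\{v\}) = 0$, which is consistent because $\mu(\lambda(\{v\})) = [v] = \pi(\{v\})$. Given an oriented $n$-simplex $\rho = [v_0,\dots,v_n]$ with $1 \leq n \leq d-1$, and with $D$ already defined in lower dimensions, form the $n$-chain $c = \mu(\lambda(\rho)) - \pi(\rho) - D(\partial\rho)$ in $\viet_*(\alpha_d)$ and set $D(\rho) = v_0 \vee c$. A direct computation using that $\mu$ and $\pi$ are chain maps, together with the chain-homotopy identity for $D$ applied to each face $\rho_i$ of $\rho$, shows that $\partial c = 0$; \cref{lem:join} then gives $\partial(v_0 \vee c) = c$, which is exactly the required identity $\partial D(\rho) + D(\partial\rho) = \mu(\lambda(\rho)) - \pi(\rho)$.

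The technical core is to carry along the inductive invariant that for each $n$-simplex $\rho$ there is a set $U_\rho \in \alpha_{n+1}$ containing both the vertices of $\rho$ and every vertex of every simplex of $D(\rho)$; this is what makes $v_0 \vee c$ well-defined and places $D(\rho)$ inside $\viet_*(\alpha_d)$. Admissibility of $\lambda$ provides $W_n \in \alpha_n$ containing the vertices of $\rho$ and supporting $\lambda(\rho)$, so $W_n$ contains every vertex of $\mu(\lambda(\rho))$ and of $\pi(\rho)$; the inductive hypothesis provides, for each face $\rho_i$, some $U_{\rho_i} \in \alpha_n$ containing the vertices of $\rho_i$ and the vertices of $D(\rho_i)$. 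Since each $\rho_i$ shares at least one vertex with $\rho$, each $U_{\rho_i}$ meets $W_n$, so all vertices of $c$ together with $v_0$ lie in $\St_{\alpha_n} W_n$. The star refinement of $\alpha_n$ into $\alpha_{n+1}$ then supplies $V \in \alpha_{n+1}$ with $\St_{\alpha_n} W_n \subseteq V$, so $v_0 \vee c$ is a chain in $\viet_*(\{V\}) \subseteq \viet_*(\alpha_d)$, and the invariant closes with $U_\rho = V$. The main obstacle will be purely organizational, namely tracking this star-refinement chain carefully so that everything ultimately sits in a single cover element; once the join has been used, no further homological input is needed.
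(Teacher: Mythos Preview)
Your approach is correct and shares the paper's key idea: construct the chain homotopy $D$ using the join of \cref{lem:join}, so that the bounding chains are produced combinatorially rather than via the $\HLC$ hypothesis. The difference lies in the formula for $D$. The paper sets $D(\rho) = x \vee (\mu(\lambda(\rho)) - \pi(\rho))$ for an arbitrary point $x$ in the set $U \in \alpha_n$ supplied by admissibility of $\lambda$; since both the support of $\lambda(\rho)$ and the vertices of $\rho$ already lie in this single $U$, no star-refinement step is needed and the definition is not inductive. You instead cone off the full cycle $c = \mu(\lambda(\rho)) - \pi(\rho) - D(\partial\rho)$, mirroring the inductive pattern of \cref{lem:independence_for_admissible_maps,lem:comm2}; this forces you to gather the supports of the $D(\rho_i)$ as well, whence your passage from $\alpha_n$ to $\alpha_{n+1}$ via star refinement. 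The paper's route is shorter and stays one cover level lower; your route carries slightly heavier bookkeeping but has the advantage that the verification $\partial D(\rho) = c$ follows immediately from $\partial c = 0$ and \cref{lem:join}, with no need for compatibility between the cone point chosen for $\rho$ and those chosen for its faces.
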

\begin{proof}
	We prove the claim by constructing a suitable chain homotopy for the diagram above, i.e., a map $D \colon \viet^{d}_{*}(\alpha_{0}) \to \viet^{d}_{*}(\alpha_{d})$ such that 
	$D \circ \partial + \partial \circ D = \mu \circ \lambda - \pi$.
	It suffices to define $D$ on simplices and extend it linearly.
	For 0-simplices $\rho = [v]$, we set $D(\rho) = 0$, which satisfies the above equation because we have $\mu(\lambda(\rho)) = [v] = \pi(\rho)$ since $\lambda(\rho)$ takes value $v$.
	If $\rho$ is an oriented $n$-simplex with $n \geq 1$, we choose $U \in \alpha_{n}$ such that the support of $\lambda(\rho)$ and the vertices of $\rho$ are contained in $U$, which is possible since $\lambda$ is admissible.
	Next, we choose an arbitrary point $x \in U$ and define $D(\rho) = x \vee \left(\mu(\lambda(\rho)) - \pi(\rho)\right)$.
	We have $n \geq 1$, so it follows from the \cref{lem:join} that 
	\begin{align*}
		D(\partial(\rho)) + \partial(D(\rho)) &= x \vee \left(\mu(\lambda(\partial(\rho))) - \pi(\partial(\rho)) \right) +  \partial \left( x \vee \left(\mu(\lambda(\rho) - \pi(\rho)\right) \right) \\
								   &= \partial(x \vee \left(\mu(\lambda(\rho)) - \pi(\rho) \right)) +  \partial \left( x \vee \left(\mu(\lambda(\rho) - \pi(\rho)\right) \right) \\
								   &= \mu(\lambda(\rho)) - \pi(\rho).
	\qedhere
	\end{align*}
\end{proof}

\begin{proof}[Proof of \cref{prop:comm3}]
	Using \cref{lem:independence_for_admissible_maps}, we may consider an inverse system indexed by $\indexSet_{s,s+\delta,d}$, mapping $(t_{0},\dots,\alpha_{d})$ to the \cref{eq:comm3'},
	which commutes by \cref{lem:comm3}.
	The connecting maps in this inverse system for relations in $\indexSet_{s,s+\delta,d}$ are given by $(H(\eta), H(\pi), H(\pi))$.
	Taking the inverse limit of this system yields the commutative \cref{eq:comm3}
	as claimed.
\end{proof}

\section{Proofs of the main theorems}\label{section:final_proof}
We summarize the results from \cref{prop:comm1,prop:comm2,,prop:comm3} in the following corollary.

\begin{cor}\label{cor:diagram}
    Let $f \colon X \to \R$ be a function whose sublevel sets are paracompact Hausdorff spaces and consider a dimension d, as well as $\delta > d \cdot \lcs_{\mathrm{singular}}(f)$. 
    Then for all $s \in \R$ there are maps $\psi_{s,s+\delta} \colon \CH_{d-1}(f_{\leq s}) \to H_*(f_{\leq s+\delta})$ such that the diagram
    \begin{equation}\label{diagram:interleaving}
        \begin{tikzcd}
                    H_{\leq d-1}(f_{\leq s}) \arrow[r] \arrow[d, "\varphi_s"] & H_{\leq d-1}(f_{\leq s+\delta}) \arrow[r] \arrow[d,"\varphi_{s+\delta}"] & H_{\leq d-1}(f_{\leq s+2\delta}) \arrow[d, "\varphi_{s+2\delta}"] \\
                    \CH_{\leq d-1}(f_{\leq s}) \arrow[r] \arrow[ru,"\psi_{s,s+\delta}", swap] & \CH_{\leq d-1}(f_{\leq s+\delta}) \arrow[r] \arrow[ru,"\psi_{s+\delta, s+2\delta}", swap] & \CH_{\leq d-1}(f_{\leq s+2\delta})
        \end{tikzcd}
    \end{equation}
    commutes.
\end{cor}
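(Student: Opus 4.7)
The corollary is essentially a direct packaging of \cref{prop:comm1,prop:comm2,prop:comm3}, so the plan is simply to verify that every pair of co-terminal paths in \cref{diagram:interleaving} agree by invoking the appropriate earlier result. I take the maps $\psi_{s,s+\delta}$ and $\psi_{s+\delta,s+2\delta}$ to be the ones constructed in \cref{section:cech_to_singular}, whose existence is justified by the hypothesis $\delta > d \cdot \lcs_{\mathrm{singular}}(f)$ via \cref{lem:directedness,lem:coinitiality}.

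The diagram decomposes into four triangles and one square. The two upper triangles (of the form $H \to \CH \to H$ with the downward $\varphi_{s}$ and $\varphi_{s+\delta}$ respectively, capped off by the horizontal top-row inclusions) commute by \cref{prop:comm2} applied at indices $s$ and $s + \delta$. The two lower triangles (of the form $\CH \to H \to \CH$ with the upward $\psi$-maps, capped off by the horizontal bottom-row inclusions) commute by \cref{prop:comm3} at the same two indices. Finally, the central square whose vertical arrows are $\psi_{s,s+\delta}$ and $\psi_{s+\delta,s+2\delta}$ and whose horizontal arrows are the inclusion-induced maps in singular and in \v{C}ech homology commutes by \cref{prop:comm1}.

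To conclude that the whole diagram commutes, I note that the two remaining ``faces'' one might worry about — namely the outer rectangles formed by a horizontal inclusion, the two relevant $\varphi$-maps, and the other horizontal inclusion — follow automatically from the triangles: for instance, composing the identity $\psi_{s,s+\delta} \circ \varphi_{s} = (H\text{-incl})$ from \cref{prop:comm2} with $\varphi_{s+\delta}$ on the left and invoking $\varphi_{s+\delta} \circ \psi_{s,s+\delta} = (\CH\text{-incl})$ from \cref{prop:comm3} yields precisely the naturality square $\varphi_{s+\delta} \circ (H\text{-incl}) = (\CH\text{-incl}) \circ \varphi_{s}$. The analogous identity at the shift by $\delta$ is obtained the same way.

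There is essentially no obstacle here beyond bookkeeping: the construction of the $\psi_{s,s+\delta}$ and all the nontrivial chain-level homotopies have already been carried out in \cref{section:cech_to_singular,section:homotopy_singular,section:homotopy_cech}, and the corollary is the act of assembling the three commutativities into a single interleaving-style diagram that will feed directly into the proofs of \cref{thm:main,thm:main_strong} in \cref{section:final_proof}.
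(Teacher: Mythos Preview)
Your proposal is correct and matches the paper's approach exactly: the paper treats this corollary as a direct summary of \cref{prop:comm1,prop:comm2,prop:comm3}, and your decomposition into the four triangles plus the central parallelogram is precisely how those three propositions assemble into \cref{diagram:interleaving}. One small remark: the outer naturality squares for $\varphi$ that you derive at the end already hold by construction (the map $\varphi$ from \cref{section:singular_to_cech} is natural in the space), so that paragraph is redundant, though your derivation via the triangles is of course also valid.
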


From \cref{cor:diagram} we can now deduce \cref{thm:main,thm:main_strong}.

\begin{proof}[Proof of \cref{thm:main}]
    We fix a dimension $d$ and indices $s < t$ and have to show that for $s < t$ the natural maps $\ker \varphi_s \to \ker \varphi_t$ and $\coker \varphi_s \to \coker \varphi_t$ are 0.
    Note that $\lcs_{\mathrm{singular}}(f) = 0$ because we assume the filtration of $f$ to be $\HLC$. 
    Thus, for $\delta = t - s$, we have $\delta > 0 = (d + 1) \cdot \lcs_{\mathrm{singular}}(f)$.
    So by \cref{cor:diagram} there exists a map $\psi_{s,t} \colon \CH_{\leq d}(f_{\leq s}) \to H_{\leq d}(f_{\leq t})$ such that \cref{diagram:interleaving} commutes. 
    Let $i_{s,t}$ denote the inclusion $f_{\leq s} \to f_{\leq t}$. 
    Given the above diagram, we see that for any $h \in \ker \varphi_s$, we have
    $
        H_{\leq d}(i_{s,t})(h) = \psi_{s,t}(\varphi_s(h)) = 0,
    $
    so that the natural map $\ker \varphi_s \to \ker \varphi_t$ must be 0.
    Similarly, we obtain that for any $h \in \CH_{\leq d}(f_{\leq s})$, we have
    $
       \CH_{\leq d}(i_{s,t})(h) = \varphi_t(\psi_{s,t}(h)) \in \im \varphi_t,
    $
    so that the natural map $\coker \varphi_s \to \coker \varphi_t$ must be 0.
\end{proof}

\begin{proof}[Proof of \cref{thm:main_strong}]
	We fix a dimension $d$.
	To show that the claimed inequality holds, it suffices to show that $H_{\leq d-1}(f_{\leq \bullet})$ and $\CH_{\leq d-1}(f_{\leq \bullet})$ are $\delta$-interleaved for any $\delta > d \cdot \lcs_{\mathrm{singular}}(f)$.
	For such $\delta$, we get maps $\psi_{s,s+\delta} \colon \CH_{\leq d}(f_{\leq s}) \to H_{\leq d}(f_{\leq s + \delta})$ for every $s \in \R$ from \cref{cor:diagram} such that \cref{diagram:interleaving} commutes.
	It follows that the maps $\psi_{s,s+\delta}$ form a $\delta$-interleaving together with the maps $\tilde\varphi_{s,s+\delta} \colon H_{\leq d}(f_{\leq s}) \to \CH_{\leq d}(f_{\leq s + \delta})$ given by composing $\varphi_{s}$ with the structure map $\CH_{\leq d}(f_{\leq s}) \to \CH_{\leq d}(f_{\leq s + \delta})$.
	This finishes the proof.
\end{proof}

\section{Lipschitz-continuity of the local connectedness shift}\label{sec:lcs_lipschitz}
We finish by proving that the local connectedness shift is a Lipschitz map with respect to the supremum norm.

\begin{prop}\label{prop:lcs_lipschitz}
	Let $X$ be a topological space and $H$ a functor from topological spaces to a category with a 0 object.
	Then for any functions $f,\ g \colon X \to \R$ we have
	\[
		| \lcs_{H}(f) - \lcs_{H}(g) | \leq 2 \cdot \| f - g \|_{\infty}.
	\]
\end{prop}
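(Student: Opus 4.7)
The plan is to show directly that if $\epsilon = \|f - g\|_\infty$, then $g$ being $\delta$-HLC implies that $f$ is $(\delta + 2\epsilon)$-HLC. Taking infima and swapping the roles of $f$ and $g$ then yields the claimed bound. The underlying intuition is that small perturbations of the function translate the sublevel sets by at most $\epsilon$ in each direction, so any homological triviality witnessed on shifted sublevel sets of $g$ still sandwiches sublevel sets of $f$ between them.

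For the key implication, I would fix $x \in X$, a neighborhood $V$ of $x$, and indices $s, t$ with $f(x) < s \leq s + (\delta + 2\epsilon) < t$, and set $s' = s + \epsilon$ and $t' = t - \epsilon$. The bound $|f(y) - g(y)| \leq \epsilon$ on all of $X$ gives $g(x) \leq f(x) + \epsilon < s'$ and $s' + \delta = s + \epsilon + \delta < t - \epsilon = t'$, so the preconditions of the $\delta$-HLC property of $g$ at $x$ and $V$ are satisfied for the shifted indices $s',t'$. Applying this property produces a neighborhood $U \subseteq V$ of $x$ for which $H$ sends the inclusion $g_{\leq s'} \cap U \to g_{\leq t'} \cap V$ to the zero morphism.

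To conclude I would use the pointwise inequalities $f \leq g + \epsilon$ and $g \leq f + \epsilon$, which yield the sandwich inclusions $f_{\leq s} \subseteq g_{\leq s+\epsilon} = g_{\leq s'}$ and $g_{\leq t'} = g_{\leq t-\epsilon} \subseteq f_{\leq t}$. Intersecting with $U$ and $V$ respectively gives the factorization
\[
f_{\leq s} \cap U \hookrightarrow g_{\leq s'} \cap U \longrightarrow g_{\leq t'} \cap V \hookrightarrow f_{\leq t} \cap V
\]
of the inclusion $f_{\leq s} \cap U \to f_{\leq t} \cap V$. Applying the functor $H$ and using that the middle arrow is sent to $0$ shows that $H$ sends the outer inclusion to $0$ as well, so $f$ is $(\delta + 2\epsilon)$-HLC. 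Taking the infimum over all $\delta > \lcs_H(g)$ gives $\lcs_H(f) \leq \lcs_H(g) + 2\epsilon$, and swapping the roles of $f$ and $g$ completes the proof.

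This argument has no genuine obstacle; the only subtle point is bookkeeping of strict versus weak inequalities when passing through the shifts $s \mapsto s + \epsilon$ and $t \mapsto t - \epsilon$, which is why the factor of $2$ (rather than $1$) appears and why we need the full $\delta + 2\epsilon$ of slack between $s$ and $t$ to still leave $\delta$ of slack between $s'$ and $t'$.
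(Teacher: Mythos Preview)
Your proof is correct and follows essentially the same sandwich argument as the paper: show that $\delta$-$\HLC$ for one function implies $(\delta+2\epsilon)$-$\HLC$ for the other by shifting indices by $\epsilon$ and using $f_{\leq s} \subseteq g_{\leq s+\epsilon}$ and $g_{\leq t-\epsilon} \subseteq f_{\leq t}$. The only cosmetic difference is that the paper runs the implication with $f$ and $g$ interchanged and phrases the intermediate sublevel sets as $f_{\leq s+e}$ and $f_{\leq s+e+\delta}$ rather than your $g_{\leq s'}$ and $g_{\leq t'}$, but this is the same argument.
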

\begin{proof}
	We set $e = \| f - g \|_{\infty}$.
	It suffices to show that if $f_{\leq \bullet}$ is $\delta$-$\HLC$ for some $\delta \geq 0$, then $g_{\leq \bullet}$ is $(\delta + 2e)$-$\HLC$.
	So let $x \in X$ and consider indices $s,t$ with $f(x) < s \leq s + \delta + 2e < t$, as well as a neighborhood $V$ of $x$ in $X$.
	If $f$ is $\delta$-$\HLC$, then we can choose a neighborhood $U$ of $x$ such that $U \subseteq V$ and the inclusion $f_{\leq s + e} \cap U \to f_{\leq s + e + \delta} \cap V$ is taken to 0 by $H$.
	Now, note that we have $g_{\leq s} \subseteq f_{\leq s + e}$ and $f_{\leq s + e + \delta} \subseteq g_{\leq s + 2e + \delta}$ because $e = \| f - g \|_{\infty}$.
	It follows that we have a commutative diagram
	\[
		\begin{tikzcd}
			 & f_{\leq s + e} \cap U \arrow[r] & f_{\leq s + e + \delta} \cap V \arrow[dr] & \\
			 g_{\leq s} \cap U \arrow[ur]\arrow[rrr] & & & g_{\leq s + 2e + \delta} \cap V
		\end{tikzcd}
	\]
	consisting of inclusion maps. In particular, $g_{\leq s} \cap U \to g_{\leq s + 2e + \delta} \cap V$ factors through $f_{\leq s + e} \cap U \to f_{\leq s + e + \delta} \cap V$.
	The second map is taken to 0 by $H$, so the same is true for the first one.
	Hence, the sublevel set filtration of $g$ is indeed $(\delta + 2e)$-$\HLC$ if the sublevel set filtratiton of $f$ is $\delta$-$\HLC$, which proves the claim.
\end{proof}

\section*{Acknowledgments}
The author thanks Ulrich Bauer, Lucas Dahinden, Anibal Medina-Mardones, and Fabian Roll for interesting and helpful discussions on the subject of this article.
This research has been supported by the German Research Foundation (DFG) through the Cluster of Excellence EXC-2181/1 \emph{STRUCTURES}, and the Research Training Group RTG 2229 \emph{Asymptotic Invariants and Limits of Groups and Spaces}.

\providecommand{\bysame}{\leavevmode\hbox to3em{\hrulefill}\thinspace}
\providecommand{\MR}{\relax\ifhmode\unskip\space\fi MR }
\providecommand{\MRhref}[2]{%
  \href{http://www.ams.org/mathscinet-getitem?mr=#1}{#2}
}
\providecommand{\href}[2]{#2}


\begin{thebibliography}{10}

\bibitem{Bauer.2022a}
Ulrich Bauer, Michael Kerber, Fabian Roll, and Alexander Rolle, \emph{A unified
  view on the functorial nerve theorem and its variations}, Preprint,
  arXiv:2203.03571, 2022.

\bibitem{Bauer.2015}
Ulrich Bauer and Michael Lesnick, \emph{Induced matchings and the algebraic
  stability of persistence barcodes}, J. Comput. Geom. \textbf{6} (2015),
  no.~2, 162--191. \MR{3333456}

\bibitem{Bauer.2022}
Ulrich Bauer, Anibal~M. Medina-Mardones, and Maximilian Schmahl,
  \emph{Persistent homology for functionals}, Preprint, arXiv:2107.14247, 2021.

\bibitem{MR4143378}
Magnus~Bakke Botnan and William Crawley-Boevey, \emph{Decomposition of
  persistence modules}, Proc. Amer. Math. Soc. \textbf{148} (2020), no.~11,
  4581--4596.

\bibitem{Bredon.1997}
Glen~E. Bredon, \emph{Sheaf theory}, 2 ed., Graduate Texts in Mathematics, vol.
  170, Springer-Verlag, New York, 1997. \MR{1481706}

\bibitem{Chazal.2009}
Fr\'{e}d\'{e}ric Chazal, David Cohen-Steiner, Marc Glisse, Leonidas~J. Guibas,
  and Steve~Y. Oudot, \emph{Proximity of persistence modules and their
  diagrams}, Proceedings of the Twenty-Fifth Annual Symposium on Computational
  Geometry (New York, NY, USA), SCG '09, Association for Computing Machinery,
  2009, p.~237–246.

\bibitem{Chazal.2016b}
Fr\'{e}d\'{e}ric Chazal, William Crawley-Boevey, and Vin de~Silva, \emph{The
  observable structure of persistence modules}, Homology Homotopy Appl.
  \textbf{18} (2016), no.~2, 247--265. \MR{3575998}

\bibitem{Chazal.2016a}
Fr\'{e}d\'{e}ric Chazal, Vin de~Silva, Marc Glisse, and Steve Oudot, \emph{The
  structure and stability of persistence modules}, SpringerBriefs in
  Mathematics, Springer, 2016. \MR{3524869}

\bibitem{Cohen-Steiner.2007}
David Cohen-Steiner, Herbert Edelsbrunner, and John Harer, \emph{Stability of
  persistence diagrams}, Discrete Comput. Geom. \textbf{37} (2007), no.~1,
  103--120. \MR{2279866}

\bibitem{Crawley-Boevey.2015}
William Crawley-Boevey, \emph{Decomposition of pointwise finite-dimen\-sional
  persistence modules}, J. Algebra Appl. \textbf{14} (2015), no.~5, 1550066, 8.
  \MR{3323327}

\bibitem{DePauw.2007}
Thierry De~Pauw, \emph{Comparing homologies: \v{C}ech's theory, singular
  chains, integral flat chains and integral currents}, Rev. Mat. Iberoam.
  \textbf{23} (2007), no.~1, 143--189. \MR{2351129}

\bibitem{Dowker.1952}
C.~H. Dowker, \emph{Homology groups of relations}, Ann. of Math. (2)
  \textbf{56} (1952), 84--95. \MR{48030}

\bibitem{Eilenberg.1952}
Samuel Eilenberg and Norman Steenrod, \emph{Foundations of algebraic topology},
  Princeton University Press, Princeton, New Jersey, 1952. \MR{0050886}

\bibitem{Engelking.1989}
Ryszard Engelking, \emph{General topology}, second ed., Sigma Series in Pure
  Mathematics, vol.~6, Heldermann Verlag, Berlin, 1989, Translated from the
  Polish by the author. \MR{1039321}

\bibitem{Govc.2018}
Dejan Govc and Primoz Skraba, \emph{An approximate nerve theorem}, Found.
  Comput. Math. \textbf{18} (2018), no.~5, 1245--1297. \MR{3857910}

\bibitem{Mardesic.1958}
Sibe Marde\v{s}i\'{c}, \emph{Comparison of singular and \v{C}ech homology in
  locally connected spaces}, Michigan Math. J. \textbf{6} (1959), 151--166.
  \MR{105677}

\bibitem{Morse.1937}
Marston Morse, \emph{Functional topology and abstract variational theory}, Ann.
  of Math. (2) \textbf{38} (1937), no.~2, 386--449. \MR{1503341}

\bibitem{Morse.1938}
\bysame, \emph{Functional topology and abstract variational theory},
  Gauthier-Villars, 1938 (eng).

\bibitem{Morse.1940}
\bysame, \emph{Rank and span in functional topology}, Ann. of Math. (2)
  \textbf{41} (1940), 419--454. \MR{1927}

\bibitem{Morse.1939}
Marston Morse and C.~Tompkins, \emph{The existence of minimal surfaces of
  general critical types}, Ann. of Math. (2) \textbf{40} (1939), no.~2,
  443--472. \MR{1503471}

\bibitem{Skljarenko.1980}
E.~G. Skljarenko, \emph{On homologically locally connected spaces}, Izv. Akad.
  Nauk SSSR Ser. Mat. \textbf{44} (1980), no.~6, 1417--1433, 1440. \MR{603583}

\end{thebibliography}
\end{document}